\documentclass[a4paper,12pt]{article}
\usepackage{amsmath}
\usepackage{amsthm}
\usepackage{graphicx}
\usepackage{verbatim}
\usepackage{epsfig}
\usepackage{hyperref}
\usepackage{float}
\usepackage{color}
\usepackage{fullpage}
\usepackage{amsfonts}
\usepackage{amscd}
\usepackage{amssymb}
\usepackage{graphics}
\usepackage{amsmath}
\usepackage[english]{babel}
\usepackage{bbm}
\usepackage{yfonts}
\usepackage{mathrsfs}
\usepackage{color}
\allowdisplaybreaks[4]
\DeclareFontFamily{OML}{rsfs}{\skewchar\font'177}
\DeclareFontShape{OML}{rsfs}{m}{n}{ <5> <6> rsfs5 <7> <8> <9>
rsfs7 <10> <10.95> <12> <14.4> <17.28> <20.74> <24.88> rsfs10 }{}
\DeclareMathAlphabet{\mathfs}{OML}{rsfs}{m}{n}

\newcommand{\BP}{{\mathbb{P}}}

\newcommand{\BZ}{{\mathbb{Z}}}

\newcommand{\CC}{{\mathcal{C}}}

\newcommand{\CL}{{\mathcal{M}}}

\newcommand{\CR}{{\mathcal{R}}}

\newcommand{\la}{\langle}
\newcommand{\ra}{\rangle}

\newcommand{\ind}{{\mathbbm{1}}}

\newcommand{\bae}{\begin{equation}\begin{aligned}}
\newcommand{\eae}{\end{aligned}\end{equation}}

\newcommand{\pr}{\mathbb{P}}
\newcommand{\Z}{\mathbb{Z}}


\newtheorem{thm}{Theorem}[section]
\newtheorem{prop}[thm]{Proposition}
\newtheorem{lem}[thm]{Lemma}

\newtheorem{definition}{Definition}[section]

\begin{document}
\numberwithin{equation}{section} \numberwithin{figure}{section}
\title{Geometry of the random interlacement}
\author{Eviatar B. Procaccia\footnote{Research supported by ISF grant 1300/08 and EU grant PIRG04-GA-2008-239317 }\footnote{Weizmann Institute of Science}, Johan Tykesson \footnotemark[\value{footnote}]}
\maketitle
\begin{abstract}
We consider the geometry of random interlacements on the
$d$-dimensional lattice. We use ideas from stochastic dimension
theory developed in \cite{benjamini2004geometry} to prove the
following: Given that two vertices $x,y$ belong to the
interlacement set, it is possible to find a path between $x$ and
$y$ contained in the trace left by at most $\lceil d/2  \rceil$
trajectories from the underlying Poisson point process. Moreover, this result
is sharp in the sense that there are pairs of points in the
interlacement set which cannot be connected by a path using the
traces of at most $\lceil d/2 \rceil-1$ trajectories.
\end{abstract}
\section{Introduction}
The model of random interlacements was introduced by Sznitman in
\cite{sznitmanvacant}, on the graph ${\mathbb Z}^d$, $d\ge 3$.
Informally, the random interlacement is the trace left by a
Poisson point process on the space of doubly infinite trajectories
on ${\mathbb Z}^d$. The intensity measure of the Poisson process
is given by $u\nu$, $u>0$ and $\nu$ is a measure on the space of
doubly infinite trajectories, see~\eqref{e:nudef} below. This is a
site percolation model that exhibits infinite-range dependence,
which for example presents serious complications when trying to
adapt techniques developed for standard independent site
percolation.

In \cite{sznitmanvacant}, it was proved that the random
interlacement on ${\mathbb Z}^d$ is always a connected set. In
this paper we prove a stronger statement (for precise formulation,
see Theorem \ref{thm:main}):
\newline

{\it Given that two vertices $x,y\in\Z^d$ belong to the interlacement
set, it is a.s. possible to find a path between $x$ and $y$
contained in the trace left by at most $\lceil d/2 \rceil$
trajectories from the underlying Poisson point process. Moreover,
this result is sharp in the sense that a.s. there are pairs of
points in the interlacement set which cannot be connected by a
path using the traces of at most $\lceil d/2 \rceil-1$
trajectories.}
\newline

Our method is based on the concept of stochastic dimension (see
Section~\ref{sec:stochdim} below) introduced by Benjamini, Kesten,
Peres and Schramm, \cite{benjamini2004geometry}. They studied the
geometry of the so called uniform spanning forest, and here we
show how their techniques can be adapted to the study of the
geometry of the random interlacements. 

In Section \ref{sec:randominterlace} we introduce the model of
random interlacements more precisely. In Section
\ref{sec:stochdim} we give the required background on stochastic
dimension and random relations from \cite{benjamini2004geometry}.
Finally the precise statement and proof of Theorem \ref{thm:main}
is split in two parts: the lower bound is given in Sections
\ref{sec:lower} and the upper bound in Section \ref{sec:upper}.

Throughout the paper, $c$ and $c'$ will denote dimension dependant constants, and their values may change from place to place. Dependence of additional parameters will be indicated, for example $c(u)$ will stand for a constant depending on $d$ and $u$.

During the last stages of this research we have learned that B.
Rath and A. Sapozhnikov, see \cite{ráth2010connectivity}, have
solved this problem independently. Their proof is significantly
different from the proof we present in this paper.

\section{Preliminaries}\label{s.preliminaries}
In this section we recall the model of random interlacements from
\cite{sznitmanvacant} and the concept of stochastic dimension from
\cite{benjamini2004geometry}.
\subsection{Random interlacements}\label{sec:randominterlace}
Let $W$ and $W_+$ be the spaces of doubly infinite and infinite trajectories in $\Z^d$
that spend only a finite amount of time in finite subsets of $\Z^d$:
$$W=\{\gamma\,:\,\Z\to \Z^d;\,|\gamma(n)-\gamma(n+1)|=1,\,\forall n\in
\Z;\,\lim_{n\to\pm\infty}|\gamma(n)|=\infty\},$$
$$W_+=\{\gamma\,:{\mathbb N}\to \Z^d;\,|\gamma(n)-\gamma(n+1)|=1,\,\forall n\in
\Z;\,\lim_{n\to\infty}|\gamma(n)|=\infty\}.$$ The canonical
coordinates on ${\mathcal W}$ and ${\mathcal W}_+$ will be denoted
by $X_n$, $n\in {\mathbb Z}$ and $X_n$, $n\in {\mathbb N}$
respectively. We endow $W$ and $W_+$ with the sigma-algebras
${\mathcal W}$ and ${\mathcal W}_+$, respectively which are
generated by the canonical coordinates. For $\gamma\in W$, let range$(\gamma)=\gamma({\mathbb Z})$. Furthermore, consider the
space $W^*$ of trajectories in $W$ modulo time shift:
$$W^*=W/\sim,\mbox{ where }w\sim w'\Longleftrightarrow
w(\cdot)=w'(\cdot+k)\mbox{ for some }k\in \Z.$$ Let $\pi^*$ be the
canonical projection from $W$ to $W^*$, and let ${\mathcal W}^*$
be the sigma-algebra on $W^*$ given by $\{A\subset
W^*\,:\,(\pi^*)^{-1}(A)\in {\mathcal W}\}.$ Given
$K\subset{\mathbb Z}^d$ and $\gamma\in W_+$, let
$\tilde{H}_K(\gamma)$ denote the hitting time of $K$ by $\gamma$:
\begin{equation}\label{hitting}
\tilde{H}_K(\gamma)=\inf\{n\ge 1\,:\,X_n(\gamma)\in K\}.
\end{equation}
For $x\in {\mathbb Z}^d$, let $P_x$ be the law on $(W_+,{\mathcal
W}_+)$ corresponding to simple random walk started at $x$, and for
$K\subset \BZ^d$, let $P_x^K$ be the law of simple random walk,
conditioned on not hitting $K$. Define the equilibrium measure of
$K$: \bae e_K(x)=\left\{\begin{array}{ll}
P_x[\tilde{H}_K=\infty],\quad &x\in K\\
0,\quad &x\notin K.\end{array}\right. \eae Define the capacity of
a set $K\subset \BZ^d$ as
\begin{equation}
\text{cap}(K)=\sum_{x\in \BZ^d} e_K(x).
\end{equation}
The normalized equilibrium measure of $K$ is defined as
\begin{align}
\tilde{e}_K(\cdot)=e_K(\cdot)/\text{cap}(K).
\end{align}
For $x,y\in {\mathbb Z}^d$ we let $|x-y|=\|x-y\|_1$
We will repeatedly make use of the following well-known estimates of hitting-probabilities. For any $x,y\in {\mathbb Z}^d$ with $|x-y|\ge 1$,
\begin{equation}\label{e.hitbounds}
c |x-y|^{-(d-2)}\le P_x[\tilde{H}_y<\infty]\le c' |x-y|^{-(d-2)},
\end{equation}
see for example Theorem 4.3.1 in \cite{lawler2010random}. Next we
define a Poisson point process on $W^*\times{\mathbb R}_+$. The
intensity measure of the Poisson point process is given by the
product of a certain measure $\nu$ and the Lebesque measure on
${\mathbb R}_+$. The measure $\nu$ was constructed by Sznitman in
\cite{sznitmanvacant}, and now we characterize it. For
$K\subset{\mathbb Z}^d$, let $W_K$ denote the set of trajectories
in $W$ that enter $K$. Let $W_K^{*}=\pi^*(W_K)$. Define $Q_K$ to
be the finite measure on $W_K$ such that for $A,B\in {\mathcal
W}_+$ and $x\in {\mathbb Z}^d$,
\begin{equation}\label{e:Qdef}Q_K[(X_{-n})_{n\ge 0}\in A,X_0=x,(X_n)_{n\ge 0}\in B]=P_x^K[A]e_K(x)P_x[B].\end{equation} The measure $\nu$ is the unique $\sigma$-finite measure such that
\begin{equation}\label{e:nudef}
\ind_{W^*_K}\nu=\pi^*\circ Q_K,\text{ }\forall K\subset{\mathbb Z}^d\text{ finite}.
\end{equation}
The existence and uniqueness of the measure was proved in Theorem 1.1 of \cite{sznitmanvacant}. Consider the set of point measures
in $W^{*}\times {\mathbb R}_+$:
\bae
\Omega=\bigg\{&\omega=\sum_{i=1}^{\infty}\delta_{(w_i^*,u_i)};\,w_i^*\in W^*, u_i\in {\mathbb R}_+,\,\\
&\omega(W_K^*\times [0,u])<\infty,\mbox{ for every finite}K\subset \Z^d\text{ and }u\in {\mathbb R}_+\bigg\},
\eae

where $W_K^*$ denotes the set of trajectories in $W^*$ that
intersect $K$. Also consider the space of point measures on $W^*$:
\begin{equation}\tilde{\Omega}=\left\{\sigma=\sum_{i=1}^{\infty}
\delta_{w_i^*};\,w_i^*\in W^*,\, \sigma(W_K^*)<\infty,\,\mbox{ for
every finite }K\subset \Z^d\right\}.
\end{equation} For $u>u'\ge 0$, we define the mapping $\omega_{u',u}$ from $\Omega$ into
$\tilde{\Omega}$ by
\begin{align}\label{e.omegaudef}\omega_{u',u}=\sum_{i=1}^\infty \delta_{w_i^*}{\ind}\{u'\le u_i\le u\},\text{ for }\omega=\sum_{i=1}^{\infty}
\delta_{(w_i^*,u_i)}\in \Omega.\end{align} If $u'=0$, we write
$\omega_u$.  Sometimes we will refer trajectories in $\omega_u$,
rather than in the support of $\omega_u$. On $\Omega$ we let
${\mathbb P}$ be the law of a Poisson point process with intensity
measure given by $\nu(dw^*)dx$. Observe that under ${\mathbb P}$,
the point process $\omega_{u,u'}$ is a Poisson point process on
$\tilde{\Omega}$ with intensity measure $(u-u') \nu(dw^*)$. Given
$\sigma\in \tilde{\Omega}$, we define
\begin{equation}
{\mathcal
I}(\sigma)=\bigcup_{w^*\in\text{supp}(\sigma)}w^*({\mathbb Z}).
\end{equation}
For $0\le u'\le u$, we define
\begin{equation}\label{e.ridef}
{\mathcal I}^{u',u}={\mathcal I}(\omega_{u',u}),
\end{equation}
which we call the \emph{random interlacement set} between levels
$u'$ and $u$. In case $u'=0$, we write ${\mathcal I}^u$.

We introduce a decomposition of $\omega_u$ as follows. Let
$\omega_u^0$ be the point measure supported on those $w_i^*\in
\text{supp}(\omega_u)$ for which $0\in w_i^*({\mathbb Z})$. Then
proceed inductively: given $\omega_u^0,...,\omega_u^{k-1}$, define
$\omega_u^{k}$ to be the point measure supported on those
$w_i^*\in\text{supp}(\omega_u)$ such that $w_i^*\notin
\mbox{supp}(\sum_{i=0}^{k-1}\omega_u^{i})$ and $w_i^*({\mathbb
Z})\cap\left( \cup_{w_i^*\in
\text{supp}(\omega_u^{k-1})}w_i({\mathbb Z})\right)\neq\emptyset.$

We define $\omega_{u|_A}$ to be $\omega_u$ restricted to $A\subset W^*$.

\subsection{Stochastic dimension}\label{sec:stochdim}
In this section, we recall some definitions and results from
\cite{benjamini2004geometry} and adapt them to our needs. For $x,y\in\Z^d$, let $\la
xy \ra= 1+|x-y|$. Suppose $W\subset {\mathbb Z}^d$ is finite and
that $\tau$ is a tree on $W$. Let $\la \tau \ra=\Pi_{\{x,y\}\in
\tau}\la xy\ra$. Finally let $\la W \ra = \min_{\tau} \la \tau
\ra$ where the minimum is taken over all trees on the vertex set
$W$. For example, for $n$ vertices $x_1,...,x_n$, $\la
x_1...x_n\ra$ is the minimum of $n^{n-2}$ products with $n-1$
factors in each.
\begin{definition}\label{def:stocdim}
Let ${\mathcal R}$ be a random subset of $ {\mathbb
Z}^d\times{\mathbb Z}^d$ with distribution ${\bf P}$. We will
think of ${\mathcal R}$ as a relation and for $(x,y)\in {\mathbb
Z}^d\times{\mathbb Z}^d$, we write $x{\mathcal R}y$ if
$(x,y)\in{\mathcal R}$. Let $\alpha\in[0,d)$. We say that
${\mathcal R}$ has \emph{stochastic dimension} $d-\alpha$ if there
exists a constant $c=c({\mathcal R})<\infty$ such that
\begin{equation}\label{e.sreldef1} c\,{\bf P}[x{\mathcal R}y]\ge
\la x y \ra^{-\alpha},
\end{equation}
and
\begin{equation}\label{e.sreldef2}
{\bf P}[x{\mathcal R}y,\,z{\mathcal R}v]\le c\la x
y\ra^{-\alpha}\la z  v\ra^{-\alpha}+c \la x y z v\ra^{-\alpha},
\end{equation}
for all $x,y,z,v\in{\mathbb Z}^d$.

If ${\mathcal R}$ has stochastic dimension $d-\alpha$, then we
write $dim_{S}({\mathcal R})=d-\alpha.$
\end{definition}
Observe that $\inf_{x,y\in {\mathbb Z}^d}{\bf P}[x{\mathcal
R}y]>0$ if and only if $dim_{S}({\mathcal R})=d$.
\begin{definition}
Let $\CR$ and $\CL$ be any two random relations. We define the
composition $\CR \CL$ to be the set of all $(x,z)\in \BZ^d\times
\BZ^d$ such that there exists some $y\in \BZ^d$ for which $x\CR y$
and $y\CL z$ holds. The $n$-fold composition of a relation
${\mathcal R}$ will be denoted by ${\mathcal R}^{(n)}$.
\end{definition}
Next we restate Theorem 2.4 of \cite{benjamini2004geometry}, which
we will use extensively.
\begin{thm}\label{thm:dimsum}
Let $\mathcal{L},\mathcal{R}\subset\Z^d$ be two independent random relations. Then
\[
dim_{S}(\mathcal{L}{\mathcal R})=\min\left\{dim_{S}({\mathcal L})+dim_{S}({\mathcal R}),d\right\},
\]
when $dim_{S}({\mathcal L})$ and $dim_{S}({\mathcal R})$ exist.
\end{thm}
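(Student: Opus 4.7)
The plan is to verify both inequalities of Definition~\ref{def:stocdim} for $\mathcal{L}\mathcal{R}$. Set $\alpha := d - \dim_S(\mathcal{L})$, $\beta := d - \dim_S(\mathcal{R})$, and $\gamma := \max(\alpha+\beta-d,\,0)$, so the target is $\dim_S(\mathcal{L}\mathcal{R}) = d - \gamma$. The analytic workhorse throughout will be the standard convolution estimate
\[
\sum_{y\in\mathbb{Z}^d}\langle xy\rangle^{-a}\langle yz\rangle^{-b} \;\leq\; c\,\langle xz\rangle^{d-a-b}, \qquad a+b>d,\ a,b<d,
\]
proved by splitting the sum into contributions from $y$ near $x$, near $z$, and far from both, together with its boundary variants (logarithmic correction when $a+b=d$, direct volume bound when $a+b<d$).

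For the lower bound $c\,\mathbf{P}[x\mathcal{L}\mathcal{R}z] \geq \langle xz\rangle^{-\gamma}$, I would apply the second moment method. Let $B$ be a ball of radius proportional to $\langle xz\rangle$ around the midpoint of $x$ and $z$, so every $y\in B$ satisfies $\langle xy\rangle\asymp\langle yz\rangle\asymp\langle xz\rangle$, and set $N = \#\{y\in B : x\mathcal{L}y,\ y\mathcal{R}z\}$. Independence of $\mathcal{L}$ and $\mathcal{R}$ factors the expectation and \eqref{e.sreldef1} yields $\mathbf{E}[N]\gtrsim\langle xz\rangle^{d-\alpha-\beta}$. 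Expanding $\mathbf{E}[N^2]$ and applying the pair upper bound \eqref{e.sreldef2} to each of $\mathcal{L}$ and $\mathcal{R}$ produces four double sums, which all collapse via the convolution estimate to at most $c\,\mathbf{E}[N]^2$; Paley--Zygmund then gives $\mathbf{P}[N>0]\gtrsim\langle xz\rangle^{-\gamma}$.

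For the pair upper bound, a union bound over intermediate points and independence yield
\[
\mathbf{P}[x\mathcal{L}\mathcal{R}z,\ u\mathcal{L}\mathcal{R}v] \;\leq\; \sum_{y,y'}\mathbf{P}[x\mathcal{L}y,\,u\mathcal{L}y']\,\mathbf{P}[y\mathcal{R}z,\,y'\mathcal{R}v],
\]
and applying \eqref{e.sreldef2} to each factor produces four double sums. The product of the leading terms from each factor contributes the $\langle xz\rangle^{-\gamma}\langle uv\rangle^{-\gamma}$ piece after two successive convolutions. Each of the three remaining cross terms carries a tree-weight $\langle\cdot\rangle^{-\alpha}$ or $\langle\cdot\rangle^{-\beta}$ on a three- or four-vertex set; further iterated convolutions, combined with elementary bounds relating $\langle W\rangle$ across different vertex sets $W$, reduce each to a contribution of order $\langle xzuv\rangle^{-\gamma}$.

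The principal obstacle is the boundary regime $\alpha+\beta\leq d$, in which $\gamma=0$, the convolution estimate no longer converges, and the claim $\dim_S(\mathcal{L}\mathcal{R})=d$ translates into uniform lower and upper bounds independent of $x,u,z,v$. There one must replace the ball $B$ by one of growing radius (to extract a uniform lower bound on $\mathbf{P}[N>0]$) and use a direct volume bound in place of the divergent convolution in the pair estimate. Handling this regime cleanly, alongside the delicate equality case $a+b=d$ in which logarithms enter, is the only genuine subtlety; the overall architecture is the standard second moment / convolution package.
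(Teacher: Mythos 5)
You should first be aware that this paper does not prove Theorem~\ref{thm:dimsum} at all: it is imported verbatim as Theorem~2.4 of \cite{benjamini2004geometry}, so the only proof to compare yours with is the original one there. Your outline is essentially that argument --- a second moment over intermediate points for the lower bound in \eqref{e.sreldef1}, and a union bound over pairs of intermediate points plus independence and ``convolution'' estimates for \eqref{e.sreldef2} --- so the route is not new; moreover the genuinely technical part of that proof, namely the summation lemmas for tree weights $\langle\cdot\rangle^{-\alpha}$ over three- and four-point vertex sets (Lemmas~2.5--2.6 of \cite{benjamini2004geometry}), is exactly the part you assert (``further iterated convolutions, combined with elementary bounds relating $\langle W\rangle$ across different vertex sets'') rather than carry out. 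A minor imprecision there: not every cross term reduces to $\langle xzuv\rangle^{-\gamma}$; some are naturally absorbed into the product term $\langle xz\rangle^{-\gamma}\langle uv\rangle^{-\gamma}$, which is harmless since \eqref{e.sreldef2} allows either, but the blanket claim is not right as stated.

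There is also one concrete error in your lower-bound step. Write $n=\langle xz\rangle$ and suppose $\gamma=\alpha+\beta-d>0$. Since $N$ is integer valued, $\mathbf{E}[N^2]\ge \mathbf{E}[N]$, and \eqref{e.sreldef1} gives $\mathbf{E}[N]\ge c\,n^{d-\alpha-\beta}=c\,n^{-\gamma}$, while the one-point upper bound $\mathbf{P}[x\mathcal{L}y]\le c\langle xy\rangle^{-\alpha}$ (obtained from \eqref{e.sreldef2} with repeated points) gives $\mathbf{E}[N]^2\le C n^{-2\gamma}$; hence for large $n$ the claimed inequality $\mathbf{E}[N^2]\le c\,\mathbf{E}[N]^2$ is impossible --- the diagonal terms alone defeat it. It had better fail: combined with Paley--Zygmund it would yield $\inf_{x,z}\mathbf{P}[x\mathcal{L}\mathcal{R}z]>0$, i.e.\ $\dim_S(\mathcal{L}\mathcal{R})=d$ irrespective of $\alpha,\beta$, which is false when $\alpha+\beta>d$ (and is exactly what the theorem, and the paper's own lower-bound argument via \eqref{e.sreldef2}, rule out, since stochastic dimension $d-\gamma$ with $\gamma>0$ forces $\mathbf{P}[x\mathcal{L}\mathcal{R}z]\to 0$). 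The bound the computation actually delivers, and the one you need, is $\mathbf{E}[N^2]\le c\,n^{-\gamma}\asymp n^{\gamma}\,\mathbf{E}[N]^2$, after which Paley--Zygmund gives precisely $\mathbf{P}[N>0]\ge c^{-1}n^{-\gamma}$. Finally, your description of the ``principal obstacle'' is misdirected: when $\alpha+\beta\le d$ one has $\gamma=0$, so the pair bound \eqref{e.sreldef2} for $\mathcal{L}\mathcal{R}$ is trivially satisfied (probabilities are at most one), and no replacement for the divergent convolution is needed on the upper-bound side; the only non-trivial task in that regime is the uniform lower bound, where your growing ball, with logarithmic care at $\alpha+\beta=d$, is indeed the right device.
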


For each $x \in {\mathbb Z}^d$, we choose a trajectory $w_x\in
W_+$ according to $P_x$. Also assume that $w_x$ and $w_y$ are
independent for $x\neq y$ and that the collection $(w_x)_{x\in
{\mathbb Z}}$ is independent of $\omega$.

We will take interest in several particular relations, defined in
terms of $\omega$ and the collection $(w_x)_{x\in {\mathbb Z}^d}$.
For $\omega=\sum_{i=1}^{\infty} \delta_{(w_i^*,u_i)}\in \Omega$,
$t_2\ge t_1\ge 0$, and $n\in {\mathbb N}$ let
\begin{enumerate}
\item  \bae\label{e.RIrelation}{\mathcal
M}_{t_1,t_2}&=\big\{(x,y)\in{\mathbb Z}^d\times{\mathbb
Z}^d\,:\,\exists \gamma\in
\text{supp}(\omega_{t_1,t_2})\,:\,x,y\in \gamma({\mathbb
Z})\big\}.\eae
If $t_1=0$, we will write ${\mathcal M}_{t_2}$ as shorthand for
${\mathcal M}_{t_1,t_2}$.
\item ${\mathcal L}=\{(x,y)\in {\mathbb Z}^d\times {\mathbb Z}^d\,:\,y\in \text{range}(w_x)\}$
\item ${\mathcal R}=\{(x,y)\in {\mathbb Z}^d\times {\mathbb Z}^d\,:\,x\in \text{range}(w_y)\}$
\item \begin{align}{\mathcal
C}_n={\mathcal L}\left(\prod_{i=2}^{n-1}{\mathcal
M}_{u(i-1)/n,ui/n}\right){\mathcal R},\,n\ge 3.
\end{align}
\end{enumerate}

\begin{thm}\label{thm:main}
For any $d\ge 3$ and all $x,y\in \Z^d$, \[\pr\left[x\mathcal{M}_u^{\lceil\frac{d}{2}\rceil}y|x,y\in\mathcal I^u\right]=1.\] In addition we have \[\pr\left[\exists x,y\in\mathcal{I}^u,y\notin\{z:x\mathcal{M}_u^{\lceil\frac{d}{2}\rceil-1}z\}\right]=1.\]
\end{thm}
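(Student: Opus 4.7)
The proof strategy follows the stochastic-dimension framework of \cite{benjamini2004geometry}, and the whole argument is organised around a single clean identity: by Theorem \ref{thm:dimsum}, composing $n$ independent relations adds stochastic dimensions up to the cap $d$. I expect each of $\mathcal{L}$, $\mathcal{R}$ and $\mathcal{M}_{t_1,t_2}$ to have stochastic dimension $2$, so $\lceil d/2\rceil$ factors are exactly what is needed to push the dimension up to $d$ and produce a uniformly positive connection probability, whereas $\lceil d/2\rceil - 1$ factors fall strictly below $d$ and yield a probability that decays with the distance between the endpoints. The proof splits accordingly into three tasks: compute the three dimensions, convert the positive probability for $\mathcal{C}_n$ into an almost-sure statement for $\mathcal{M}_u^n$, and exploit the decay for the lower bound.

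For the dimensions, the hitting-probability estimate \eqref{e.hitbounds} directly gives $\pr[x\mathcal{L}y] = \pr[x\mathcal{R}y] \asymp \la xy\ra^{-(d-2)}$, and the joint bound \eqref{e.sreldef2} reduces to the single-point bound using the mutual independence of $(w_x)_{x\in\Z^d}$. For $\mathcal{M}_{t_1,t_2}$, the characterisation \eqref{e:nudef} of $\nu$ applied with $K=\{x,y\}$ together with \eqref{e.hitbounds} yields $\pr[x\mathcal{M}_{t_1,t_2}y]\asymp(t_2-t_1)\la xy\ra^{-(d-2)}$. The substantive piece is checking \eqref{e.sreldef2} for $\mathcal{M}_{t_1,t_2}$: I would split according to whether the pairs $(x,y)$ and $(z,v)$ are witnessed by disjoint trajectories of the PPP or by a single shared one, handling the former by Poisson independence and the latter by a direct four-point computation with $\nu$.

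Given these dimensions, the upper bound of Theorem \ref{thm:main} is almost immediate: the factors $\mathcal{M}_{u(i-1)/n,\,ui/n}$ are independent across $i$ because they live on disjoint time slices of $\omega$, and they are independent of $\mathcal{L}$ and $\mathcal{R}$, which depend only on the auxiliary walks. Hence Theorem \ref{thm:dimsum} yields $\dim_S(\mathcal{C}_n) = \min(2n,d)$, which equals $d$ for $n=\lceil d/2\rceil$, and so $\inf_{x,y}\pr[x\mathcal{C}_n y] > 0$. To transfer this to $\mathcal{M}_u^n$, I would use a Palm-type argument: conditional on $x\in\mathcal{I}^u$, expose the trajectory $\gamma_x$ of $\omega_u$ through $x$; by a Mecke-Slivnyak identity for $\omega$, the forward half of $\gamma_x$ from $x$ is distributed as a simple random walk independent of the rest of the configuration, so it can be coupled with $w_x$, and similarly for $y$. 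Under this coupling each $\mathcal{L}$-link becomes an $\mathcal{M}_u$-link (and symmetrically for $\mathcal{R}$), turning $x\mathcal{C}_n y$ into $x\mathcal{M}_u^n y$ on the event $\{x,y\in\mathcal{I}^u\}$. The uniformly positive conditional probability is then promoted to conditional probability one via the ergodicity of the interlacement under $\Z^d$-translations applied to the corresponding translation-invariant event.

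For the lower bound, a convolution-of-Green's-functions estimate together with the one-trajectory bound $\pr[x\mathcal{M}_u z]\le c\la xz\ra^{-(d-2)}$ gives $\pr[x\mathcal{M}_u^{n-1}y]\le c\la xy\ra^{-(d-2(n-1))}$ whenever $n-1<d/2$, with strictly positive exponent. Since $\pr[x,y\in\mathcal{I}^u]$ is bounded below by a positive constant uniformly in $x,y$, the conditional probability $\pr[x\mathcal{M}_u^{n-1}y\mid x,y\in\mathcal{I}^u]$ tends to zero as $|x-y|\to\infty$, so a positive-probability set of configurations contains pairs $x,y\in\mathcal{I}^u$ not connected through $n-1$ trajectories, and ergodicity upgrades this to probability one. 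The hardest step I anticipate is the Palm coupling of $\gamma_x$ with $w_x$ that respects the joint law of the entire Poisson process and the family $(w_z)_z$; the remaining ingredients are standard, if laborious, calculations with hitting probabilities and capacities.
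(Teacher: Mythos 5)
Your dimension computations and the identification of $\lceil d/2\rceil$ as the critical number of factors match the paper, but two steps in your plan do not work as stated. The more serious one is the promotion of $\inf_{x,y}\pr[x\mathcal{C}_n y]>0$ to probability one ``via ergodicity of the interlacement applied to the corresponding translation-invariant event.'' The event $\{x\mathcal{C}_n y\}$ (or $\{x\mathcal{M}_u^{(n)}y\}$) for a \emph{fixed} pair $x,y$ is not translation invariant, and the genuinely invariant event ``every pair of interlacement points is connected within $n$ trajectories'' is not one whose positive probability follows from the uniform lower bound on pair probabilities; ergodicity alone cannot rule out that almost every configuration contains some exceptional pair. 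This is exactly why the paper has to verify the hypotheses of Corollary 3.4 of \cite{benjamini2004geometry}: left tail triviality of $\mathcal{L}$, right tail triviality of $\mathcal{R}$, and --- the bulk of the work --- remote tail triviality of $\mathcal{M}_u$, proved by conditioning on the number $N$ of trajectories meeting a large ball and on their entry/exit points, together with a coupling lemma for conditioned walks and a Poisson total-variation estimate. None of this appears in your proposal, and without it the passage from positive probability to probability one is unjustified. (Your Palm/coupling step replacing $w_x,w_y$ by interlacement trajectories through $x$ and $y$ is in the right spirit; the paper does it more cleanly by putting the endpoint trajectories in the extreme time slices $\mathcal{I}^{u/n}$, $\mathcal{I}^{(n-1)u/n,u}$ and then decomposing $\{x,y\in\mathcal{I}^u\}$ over disjoint rational time intervals, which sidesteps the dependence issues your conditioning on $\{x\in\mathcal I^u\}$ creates.)

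The second gap is in the lower bound: you bound $\pr[x\mathcal{M}_u^{(n-1)}y]$ by convolving the one-link estimate $\pr[x\mathcal{M}_u z]\le c\la xz\ra^{-(d-2)}$, but all links of the composition $\mathcal{M}_u^{(n-1)}$ are defined through the \emph{same} Poisson process $\omega_u$, so the joint probability of the links does not factor and Theorem \ref{thm:dimsum} does not apply either (it requires independent relations). A single trajectory, or shared trajectories between consecutive links, create correlations that a naive convolution ignores; controlling them directly would require a multi-pair analogue of the correlation bound \eqref{e.sreldef2} with an increasingly messy case analysis. The paper circumvents this by decomposing $\omega_u$ into the layers $\omega_u^0,\omega_u^1,\dots$ (trajectories at successive ``intersection distances'' from the origin), proving via the Appendix lemma that, conditionally, each layer is a Poisson process, and coupling the layers with independent copies $\sigma_0,\sigma_1,\dots$ of $\omega_u$; only after this reduction does the stochastic-dimension bound $2m<d$ yield $\pr[0\mathcal{M}_u^{(m)}x]\to 0$. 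Your final step (decay of the conditional probability gives a positive-probability bad pair, then a zero-one law for the invariant existential event) is fine and is a legitimate alternative to the paper's Borel--Cantelli/ergodic-average argument, but the decay estimate it rests on is the part you have not actually established.
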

For $d=3,4$ the theorem follows easily from the fact that two
independent simple random walk trajectories intersect each other
almost surely, and we omit the details of these two cases. From
now on, we will assume that $d\ge 5$. A key step in the proof of
our main theorem, is to show that for every $x,y\in {\mathbb
Z}^d$, we have ${\mathbf P}[x{\mathcal C}_{\lceil d/ 2\rceil}
y]=1$.
\begin{prop}\label{p.stokdim}
Under ${\mathbb P}$, for any $0\le t_1<t_2<\infty$, the relation
${\mathcal M}_{t_1,t_2}$ has stochastic dimension $2$.
\end{prop}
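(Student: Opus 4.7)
Since $\omega_{t_1,t_2}$ is a Poisson point process on $W^*$ with intensity measure $(t_2-t_1)\nu$, the event $\{x\,\mathcal{M}_{t_1,t_2}\,y\}$ is exactly $\{\omega_{t_1,t_2}(A_{xy})\ge 1\}$, where $A_{xy}\subset W^*$ is the set of trajectories (modulo time shift) that pass through both $x$ and $y$. To prove $dim_S(\mathcal{M}_{t_1,t_2})=2$, i.e.\ $\alpha=d-2$ in Definition~\ref{def:stocdim}, the plan is to establish (i) $c\la xy\ra^{-(d-2)}\le \nu(A_{xy})\le c'\la xy\ra^{-(d-2)}$ and (ii) $\nu(A_{xy}\cap A_{zv})\le c\la xyzv\ra^{-(d-2)}$. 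Combined with the elementary Poisson facts $1-e^{-\lambda}\asymp\lambda\wedge 1$ and independence of $\omega_{t_1,t_2}$ restricted to disjoint subsets of $W^*$, (i) and (ii) give both defining inequalities of stochastic dimension.

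\textbf{Two-sided estimate for $\nu(A_{xy})$.} Applying \eqref{e:nudef} with $K=\{x,y\}$ together with \eqref{e:Qdef}, each trajectory in $A_{xy}$ is uniquely represented by its first entrance point $X_0\in\{x,y\}$, a backward excursion distributed as $P_{X_0}^K$, and a forward walk distributed as $P_{X_0}$. Decomposing on the value of $X_0$ and noting that, conditionally on $X_0=x$, membership in $A_{xy}$ is the event that the forward walk hits $y$, yields
\begin{equation*}
\nu(A_{xy})=e_{\{x,y\}}(x)\,P_x[\tilde H_y<\infty]+e_{\{x,y\}}(y)\,P_y[\tilde H_x<\infty].
\end{equation*}
The equilibrium mass satisfies $c\le e_{\{x,y\}}(x)\le 1$ uniformly in $x\ne y$: the upper bound is trivial, and the lower bound follows from $e_{\{x,y\}}(x)\ge P_x[\tilde H_x=\infty]-P_x[\tilde H_y<\infty]\ge c-c'\la xy\ra^{-(d-2)}$ for large $\la xy\ra$, together with a direct check for small $\la xy\ra$. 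Combining this with \eqref{e.hitbounds} produces the two-sided estimate on $\nu(A_{xy})$ and hence on $\mathbb{P}[x\,\mathcal{M}_{t_1,t_2}\,y]$, settling \eqref{e.sreldef1}.

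\textbf{Upper bound on $\nu(A_{xy}\cap A_{zv})$ and correlation.} A trajectory in $A_{xy}\cap A_{zv}$ visits all four points in some order $p_1,p_2,p_3,p_4$. Parametrizing by $X_0=p_1$ (the first-visited point) and applying the strong Markov property successively on the forward walk, the contribution from this fixed ordering is at most
\begin{equation*}
e_{\{x,y,z,v\}}(p_1)\,P_{p_1}[\tilde H_{p_2}<\infty]\,P_{p_2}[\tilde H_{p_3}<\infty]\,P_{p_3}[\tilde H_{p_4}<\infty]\le c\prod_{i=1}^3\la p_ip_{i+1}\ra^{-(d-2)}
\end{equation*}
by \eqref{e.hitbounds}; the product on the right is $\la\tau\ra^{-(d-2)}$ for a Hamiltonian path $\tau$ on $\{x,y,z,v\}$ and is therefore $\le\la xyzv\ra^{-(d-2)}$ by the minimum-tree definition of $\la\cdot\ra$. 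Summing over the $4!$ orderings gives (ii). Finally, partitioning $W^*$ into $A_{xy}\cap A_{zv}$, $A_{xy}\setminus A_{zv}$, and $A_{zv}\setminus A_{xy}$, and using Poisson independence on disjoint sets,
\begin{equation*}
\mathbb{P}[x\,\mathcal{M}\,y,\,z\,\mathcal{M}\,v]\le(t_2-t_1)\,\nu(A_{xy}\cap A_{zv})+\mathbb{P}[x\,\mathcal{M}\,y]\,\mathbb{P}[z\,\mathcal{M}\,v],
\end{equation*}
and plugging in (i) and (ii) delivers \eqref{e.sreldef2}.

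\textbf{Main obstacle.} The technical heart is the careful bookkeeping of $\nu$ on multi-point events such as $A_{xy}\cap A_{zv}$: one must enumerate over the first-entrance point into $K$ and the order of subsequent visits, apply the strong Markov property correctly to disentangle the forward excursions between consecutive visited points, and handle the backward excursion being conditioned to avoid $K$. Once the clean four-point bound $\nu(A_{xy}\cap A_{zv})\le c\la xyzv\ra^{-(d-2)}$ --- which exactly matches the target exponent $\alpha=d-2$ --- is in place, the stochastic dimension inequalities follow routinely from the Poisson structure of $\omega_{t_1,t_2}$.
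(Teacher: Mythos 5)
Your proposal is correct, and although it ultimately rests on the same two ingredients as the paper's argument --- the hitting estimate \eqref{e.hitbounds} and the permutation/strong-Markov bound culminating in the tree estimate as in \eqref{e.supper2} --- the Poisson bookkeeping is organized differently, in a way that streamlines the proof. The paper samples the trajectories meeting the single point $x$ (a Poisson number with mean $u\,\text{cap}(x)$ of double-sided walks), so when bounding the probability that one trajectory covers $K=\{x,y,z,v\}$ it must distribute the remaining three points between the forward and the backward parts of the walk; this is exactly the role of the events $A,A',B,B'$ in \eqref{B1}--\eqref{B2}. You instead evaluate the intensity measure directly through \eqref{e:nudef} and \eqref{e:Qdef} with $K$ taken to be the \emph{full} set of relevant points: under $Q_K$ the backward part is conditioned to avoid $K$ entirely, so every visit to $K$ occurs on the forward, unconditioned walk, and a one-sided chain of hitting estimates over the $4!$ visit orders suffices; likewise the two-point identity $\nu(A_{xy})=e_{\{x,y\}}(x)P_x[\tilde{H}_y<\infty]+e_{\{x,y\}}(y)P_y[\tilde{H}_x<\infty]$ gives both the lower bound \eqref{e.sreldef1} and the upper bound on $\BP[x\mathcal{M}y]$ needed for the product term in \eqref{e.sreldef2} in one stroke (your uniform lower bound on $e_{\{x,y\}}(x)$ is the right justification, with the finitely many small values of $\la xy\ra$ handled by transience and translation invariance). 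Your split of the pair event via the independent Poisson counts on $A_{xy}\cap A_{zv}$, $A_{xy}\setminus A_{zv}$ and $A_{zv}\setminus A_{xy}$ is the same in spirit as the paper's decomposition according to $\hat{\omega}_u=0$ or $\hat{\omega}_u\neq 0$, but it makes the independence invoked in \eqref{e.decompterm1} completely transparent, and the bound $\BP[\omega(A_{xy}\cap A_{zv})\ge 1]\le (t_2-t_1)\nu(A_{xy}\cap A_{zv})$ replaces the computation \eqref{e.compl}. The only points you leave implicit --- the degenerate cases of non-distinct $x,y,z,v$, which the paper also sets aside --- are routine, so your route is a valid and arguably cleaner path to the same conclusion.
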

\begin{proof}
Clearly, it is enough to consider the case $t_1=0$ and
$t_2=u\in(0,\infty)$. First recall that the trajectories in
$\text{supp}(\omega_u)$ that intersect $x\in {\mathbb Z}^d$ can be
sampled in the following way (see for example Theorem 1.1 and
Proposition 1.3 of \cite{sznitmanvacant}):
\begin{enumerate}\label{e.sample}
\item Sample a Poisson random number $N$ with mean $u \text{cap}(x)$
\item Then sample $N$ independent double sided infinite trajectories, where each such trajectory is given by a simple random walk path started at $x$, together with a simple random walk path started at $x$ conditioned on never returning to $x$.
\end{enumerate}

We now establish a lower bound of ${\mathbb P}[x{\CL}_u y]$. Since
any trajectory in $\text{supp}(\omega_u)$ intersecting $x$
contains a simple random walk trajectory started at $x$, we obtain
that
\begin{align}
\BP[x \CL_u y]\stackrel{~\eqref{e.hitbounds}}{\ge} c \la xy
\ra^{-(d-2)}.
\end{align}
Thus the condition ~\eqref{e.sreldef1} is established and it remains to establish the more complicated condition~\eqref{e.sreldef2}.
For this, fix distinct vertices $x,y,z,v\in{\mathbb Z}^d$ and put
$K=\{x,y,z,v\}$. Our next task is to find an upper bound of
$\BP[x\CL_u y,z\CL_u v]$. For $\omega_u=\sum_{i\ge
0}\delta_{w_i^*}$, we let $\hat{\omega}_u=\sum_{i\ge
0}\delta_{w_i^*}\ind_{\{\text{range}(w_i^*)\supset K\}}$. We now
write
\begin{align}\label{e.decompevent}
\BP[x\CL_u y,z\CL_u v]=\BP[x\CL_u y,z\CL_u v,\hat{\omega}_u=0]+\BP[x\CL_u
y,z\CL_u v,\hat{\omega}_u\neq 0],
\end{align}
and deal with the two terms on the right hand side
of~\eqref{e.decompevent} separately. For a point measure
$\tilde{\omega}\le \omega_u$, we write "$x\CL_u y$ in $\tilde{\omega}$"
if there is a trajectory in $\text{supp}(\tilde{\omega})$ whose range contains both $x$ and
$y$. Observe that if $w^*\in \text{supp}(\omega_u-\hat{\omega}_u)$ and $x,y\in \text{range}(w^*)$, then at least one of $z$ or $v$ does not belong to $\text{range}(w^*)$. Hence, the events $\{x\CL_u y\text{ in }\omega_u-\hat{\omega}_u\}$ and $\{z\CL_u v\text{ in }\omega_u-\hat{\omega}_u\}$ are defined in terms of disjoint sets of trajectories, and thus they are independent under ${\mathbb P}$. We get that
\begin{align}\label{e.decompterm1}
\BP[x\CL_u y,z\CL_u v,\hat{\omega}=0]&=\BP[x\CL_u y\text{ in
}\omega_u-\hat{\omega}_u,z\CL_u v\text{ in
}\omega_u-\hat{\omega}_u,\hat{\omega}_u=0]\notag\\ &\le \BP[x\CL_u
y\text{ in
}\omega_u-\hat{\omega}_u,z\CL_u v\text{ in }\omega_u-\hat{\omega}_u]\notag\\
&=\BP[x\CL_u y\text{ in }\omega_u-\hat{\omega}_u]\BP[z\CL_u v\text{ in
}\omega_u-\hat{\omega}_u]\notag\\&\le \BP[x\CL_u y]\BP[z\CL_u v]\notag\\ &\le c(u) (\la x\,y\ra \la z\,v\ra)^{-(d-2)}.
\end{align}
where in the second equality we used the independence that was mentioned above. In addition, we have
\begin{equation}\label{e.decompterm2}
\BP[x\CL_u y,z\CL_u v,\hat{\omega}_u\neq 0]\le
\BP[\hat{\omega}_u\neq 0].
\end{equation}
We now find an upper bound on $\BP[\hat{\omega}_u\neq 0].$ In view of~\eqref{e.decompterm1}, ~\eqref{e.decompterm2} and~\eqref{e.decompevent}, in order to establish~\eqref{e.sreldef2} with $\alpha=d-2$, it is sufficient to show that
\begin{equation}\label{e.sufficient}
\BP[\hat{\omega}_u\neq 0]\le c(u) \la xyzv \ra^{-(d-2)}.
\end{equation}

Using the method of sampling the trajectories from $\omega_u$ containing $x$ and the fact that the law of a simple random walk started at $x$ conditioned on never returning to $x$ is dominated by the law of a simple random walk started at $x$ (here we use that the trajectory of a simple random walk after the last time it visits $x$ has the same distribution as a the trajectory of a simple random walk conditioned on not returning to $x$), we obtain that ${\mathbb P}[\hat{\omega}_u\neq 0]$ is bounded from above by the probability that at least one of $N$ independent double sided simple random walks started at $x$ hits each of $y,z,v$. Here $N$ again is a Poisson random variable with mean $u \text{cap}(x)$.  We obtain that
\begin{align}\label{e.compl}
{\mathbb P}[\hat{\omega}\neq 0]&= 1-\exp(-u\text{cap}(x)
P_x^{\otimes 2}[\{y,z,v\}\subset (X_n')_{n\ge 0}\cup (X_n)_{n\ge
0}])\notag\\ &\le u \text{cap} (x) P_x^{\otimes
2}[\{y,z,v\}\subset (X_n')_{n\ge 0}\cup (X_n)_{n\ge 0}],
\end{align}
where we in the last inequality made use of the inequality $1-\exp(-x)\le x$ for $x\ge 0$.
Here, $P_x^{\otimes 2}[\{y,z,v\}\subset (X_n')_{n\ge 0}\cup (X_n)_{n\ge 0}]$ is the probability that a double sided simple random walk starting at $x$ hits each of $y,z,v$. In order to bound this probability, we first obtain some quite standard hitting estimates. We have
\bae\label{e.supper2}
P_x[H_y<\infty,\,H_z<\infty,\,H_v<\infty]&= \sum_{\stackrel{x_1,x_2,x_3\in} {\text{perm(z,y,v)}}}P_x[H_{x_1}<H_{x_2}<H_{x_3}<\infty]\\ &\le \sum_{\stackrel{x_1,x_2,x_3\in} {\text{perm(z,y,v)}}}P_x[H_{x_1}<\infty]P_{x_1}[H_{x_2}<\infty]P_{x_2}[H_{x_3}<\infty]\\ &\le c \sum_{\stackrel{x_1,x_2,x_3\in} {\text{perm(z,y,v)}}}\left(\la x\,x_1\ra \la x_1\,x_2\ra \la x_2\,x_3\ra\right)^{-(d-2)}\\ &\le c \la K \ra^{-(d-2)},
\eae
where the sums are over all permutations of $z,y,v$. Similarly, for any choice of $x_1$ and $x_2$ from $\{y,z,v\}$ with $x_1\neq x_2$,
\begin{equation}\label{e.supper3}
P_x[H_{x_1}<\infty,\,H_{x_2}<\infty]\le c((\la x\,x_1 \ra \la
x_1\,x_2 \ra)^{-(d-2)}+(\la x\,x_2 \ra \la x_2\,x_1 \ra)^{-(d-2)})
\end{equation}
and
\begin{equation}\label{e.supper4}
P_x[H_{x_1}<\infty]\le c\la x\,x_1 \ra^{-(d-2)}.
\end{equation}

Now set $A=\{\{y,z,v\}\subset (X_n)_{n\ge 0}\}$,
$A'=\{\{y,z,v\}\subset (X_n')_{n\ge 0}\}$,
\begin{equation}\label{B1}B=\bigcup_{t=y,z,v}\{t\subset (X_n)_{n\ge
0},\,K\setminus\{t\}\subset (X_n')_{n\ge 0}\},
\end{equation}
and
\begin{equation}\label{B2}B'=\bigcup_{t=y,z,v}\{t\subset (X_n')_{n\ge
0},\,K\setminus\{t\}\subset (X_n)_{n\ge 0}\}.
\end{equation}

Observe that

\begin{equation}\label{e.eventinclusion}
\{\{y,z,v\}\subset (X_n')_{n\ge 0}\cup (X_n)_{n\ge 0}\}\subset
A\cup A'\cup B \cup B'.
\end{equation}

We have
\begin{equation}\label{e.aestimates}
P_x[A]=P_x[A']\stackrel{~\eqref{e.supper2}}{\le} c\la K \ra
^{-(d-2)}.
\end{equation}
Using the independence between $(X_n)_{n\ge 0}$ and $(X_n')_{n\ge
0}$, it readily follows that
\begin{equation}\label{e.bestimates}
P_x[B]=P_x[B']\stackrel{~\eqref{e.supper3},~\eqref{e.supper4}}{\le}c
\la K \ra^{-(d-2)}.
\end{equation}

From~\eqref{e.eventinclusion}, ~\eqref{e.aestimates} and
~\eqref{e.bestimates} and a union bound, we obtain
\begin{equation}\label{e.penultimateestimate}
P_x^{\otimes 2}[\{y,z,v\}\subset (X_n')_{n\ge 0}\cup (X_n)_{n\ge
0}]\le c \la K \ra^{-(d-2)}.
\end{equation}

Combining~\eqref{e.compl} and~\eqref{e.penultimateestimate}
gives~\eqref{e.sufficient}, finishing the proof of the
proposition.
\end{proof}
\begin{lem}\label{l.lrdim}
The relations $\mathcal{L}$ and $\mathcal{R}$ have stochastic dimension $2$.
\end{lem}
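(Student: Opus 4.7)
The plan is to reduce the statement for ${\mathcal R}$ to the one for ${\mathcal L}$, and then verify the two inequalities defining stochastic dimension with $\alpha=d-2$ directly from the hitting estimate \eqref{e.hitbounds} and the independence of the walks $(w_x)_{x\in\Z^d}$. The reduction is immediate: $(x,y)\in{\mathcal R}$ iff $(y,x)\in{\mathcal L}$, and the conditions \eqref{e.sreldef1}--\eqref{e.sreldef2} are unchanged under swapping the two entries within each pair (since $\la xy\ra=\la yx\ra$), so $dim_S({\mathcal R})=dim_S({\mathcal L})$. It therefore suffices to show $dim_S({\mathcal L})=2$.

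For the lower bound \eqref{e.sreldef1}, I note that ${\mathbf P}[x{\mathcal L}y]=P_x[\tilde H_y<\infty]\geq c\la xy\ra^{-(d-2)}$ when $x\neq y$ by \eqref{e.hitbounds}, and the bound is trivial when $x=y$ since $w_x$ starts at $x$. For the upper bound \eqref{e.sreldef2}, I would split on whether $x=z$. In the case $x\neq z$, the events $\{x{\mathcal L}y\}$ and $\{z{\mathcal L}v\}$ are measurable with respect to the independent walks $w_x$ and $w_z$, so
\[{\mathbf P}[x{\mathcal L}y,\,z{\mathcal L}v]={\mathbf P}[x{\mathcal L}y]\,{\mathbf P}[z{\mathcal L}v]\leq c\la xy\ra^{-(d-2)}\la zv\ra^{-(d-2)},\]
which already matches the first term on the right-hand side of \eqref{e.sreldef2}.

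The only non-trivial step is the case $x=z$, where both events depend on the single walk $w_x$. Assuming $y\neq v$ (the case $y=v$ reduces to the single-hit bound ${\mathbf P}[x{\mathcal L}y]\leq c\la xy\ra^{-(d-2)}$, which is absorbed by the right-hand side of \eqref{e.sreldef2}), I would decompose $\{\tilde H_y<\infty,\tilde H_v<\infty\}$ according to which of $y,v$ is visited first and apply the strong Markov property together with \eqref{e.hitbounds} to get
\[P_x[\tilde H_y<\infty,\tilde H_v<\infty]\leq c(\la xy\ra\la yv\ra)^{-(d-2)}+c(\la xv\ra\la vy\ra)^{-(d-2)}.\]
Each summand is at most $c\la\{x,y,v\}\ra^{-(d-2)}$, since $\la\{x,y,v\}\ra$ is the minimum of the three products $\la xy\ra\la yv\ra$, $\la xy\ra\la xv\ra$, $\la xv\ra\la vy\ra$ taken over spanning trees of $\{x,y,v\}$. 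Because $\{x,y,z,v\}=\{x,y,v\}$ when $z=x$, this equals $c\la xyzv\ra^{-(d-2)}$, giving the second term in \eqref{e.sreldef2}. This $x=z$ case is the main (but modest) obstacle; it is essentially a three-point specialisation of the estimate \eqref{e.supper3} used in the proof of Proposition \ref{p.stokdim}, so no new ingredients are required beyond \eqref{e.hitbounds} and the strong Markov property.
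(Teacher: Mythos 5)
Your proof is correct and follows essentially the same route as the paper: the lower bound and the case of distinct starting points come from \eqref{e.hitbounds} together with the independence of the walks $w_x$ and $w_z$, exactly as in the paper's argument, and your reduction of $\mathcal{R}$ to $\mathcal{L}$ by swapping coordinates corresponds to the paper's remark that the same arguments apply to $\mathcal{R}$. You are in fact slightly more thorough than the paper, whose proof only records the product bound for independent walks and does not spell out the degenerate case $x=z$; you handle it correctly via the first-visit/strong Markov decomposition (a three-point version of \eqref{e.supper3}) and observe that the resulting bound is absorbed by the term $c\la x y z v\ra^{-(d-2)}$ in \eqref{e.sreldef2}.
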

\begin{proof}
We start with the relation ${\mathcal L}$. For $x,y\in {\mathbb
Z}^d$, we have
\begin{align}\label{e.ldim1}
{\bf P}[x{\mathcal L}y]={\bf P}[y\in
\text{range}(w_x)]=P_x[\tilde{H}_y<\infty]
\end{align}
From~\eqref{e.ldim1} and~\eqref{e.hitbounds}, we
obtain
\begin{align}\label{e.ldim3}
c|x-y|^{-(d-2)}\le{\bf P}[x{\mathcal L}y]\le c'|x-y|^{-(d-2)}
\end{align} In addition, for $x,y,z,w\in
{\mathbb Z}^d$, using the independence between the walks $w_x$ and
$w_y$, we get
\begin{align}\label{e.ldim2}
{\bf P}[x{\mathcal L}z,\,y{\mathcal L}w]={\bf P}[x{\mathcal
L}z]{\bf P}[y{\mathcal L}w]\stackrel{~\eqref{e.ldim3}}{\le}
c|x-z|^{-(d-2)} |y-w|^{-(d-2)}.
\end{align}
From~\eqref{e.ldim3} and ~\eqref{e.ldim2} we obtain
$dim_S({\mathcal L})=2$. The proof of the statement
$dim_S({\mathcal R})=2$ is shown by the same arguments.
\end{proof}
Recall the definition of the walks $(w_x)_{x\in {\mathbb Z}^d}$
from above.
\begin{lem}\label{lem:cdimd}
For any $u>0$ and $n\ge 3$, $dim_S(\CC_n)=\min(2n,d)$.
\end{lem}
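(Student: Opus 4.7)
The plan is to apply Theorem \ref{thm:dimsum} iteratively to the composition $\CC_n=\CL\cdot\prod_{i=2}^{n-1}\CM_{u(i-1)/n, ui/n}\cdot\CR$. The underlying randomness splits into jointly independent blocks: the family $(w_x)_{x\in\Z^d}$, which drives $\CL$ and $\CR$, and the restrictions of $\omega$ to the $n-2$ disjoint level strips $[(i-1)u/n, iu/n]$, each of which drives a single $\CM_{u(i-1)/n, ui/n}$.

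First I would treat the middle block. By Proposition \ref{p.stokdim} each $\CM_{u(i-1)/n, ui/n}$ has stochastic dimension $2$, and the $n-2$ such relations are mutually independent as functionals of disjoint Poisson strips. Iterating Theorem \ref{thm:dimsum} a total of $n-3$ times yields
$$dim_{S}\Bigl(\prod_{i=2}^{n-1}\CM_{u(i-1)/n, ui/n}\Bigr)=\min(2(n-2),d).$$
Next I would attach $\CL$ on the left: by Lemma \ref{l.lrdim} it has stochastic dimension $2$, and it is independent of the above product because $(w_x)$ is independent of $\omega$. One further application of Theorem \ref{thm:dimsum} gives stochastic dimension $\min(2(n-1),d)$ for $\CL\cdot\prod_{i=2}^{n-1}\CM_{u(i-1)/n, ui/n}$.

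The final composition, with $\CR$, is the only delicate step, since $\CR$ and $\CL$ are both functionals of $(w_\cdot)$ and therefore are not independent as random subsets of $\Z^d\times\Z^d$. The key observation is that for any distinct $x,z\in\Z^d$, the event $\{x\,\CC_n\,z\}$ uses $(w_\cdot)$ only through the two independent walks $w_x$ (via $\CL$) and $w_z$ (via $\CR$), so the triple $(w_x,w_z,\omega)$ is jointly independent, which is exactly the ingredient the proof of Theorem \ref{thm:dimsum} uses when estimating first and second moments of compositions of relations. I would therefore replace $\CR$ by the relation $\CR'$ built from an independent copy $(w'_x)$ of $(w_x)$; applying Theorem \ref{thm:dimsum} to the now fully independent composition gives $dim_{S}(\CL\cdot\prod_{i=2}^{n-1}\CM_{u(i-1)/n, ui/n}\cdot\CR')=\min(2n,d)$, and then verify that the two conditions in Definition \ref{def:stocdim} for $\CC_n$ follow from the corresponding ones for $\CL\cdot\prod_{i=2}^{n-1}\CM_{u(i-1)/n, ui/n}\cdot\CR'$. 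The four-point case $(x,y,z,v)$ with all four vertices distinct is automatic because the joint distributions of the walks used by $\CC_n$ and by its independent-copy version agree on that event, while the finitely many coincidence patterns $\{x,y\}\cap\{z,v\}\neq\emptyset$ can be absorbed into the second term $c\la xyzv\ra^{-\alpha}$ of Definition \ref{def:stocdim}.

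I expect this last step to be the main obstacle, since everything else is a direct invocation of Theorem \ref{thm:dimsum}: the shared family $(w_x)$ between $\CL$ and $\CR$ forces either a small extension of Theorem \ref{thm:dimsum} weakening its independence hypothesis, or the coupling argument above together with a case analysis of coinciding endpoints, rather than a clean black-box application of the theorem.
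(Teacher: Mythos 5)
Your main line---iterating Theorem \ref{thm:dimsum} over the $n-2$ relations $\CM_{u(i-1)/n,ui/n}$ (independent, since they are functions of disjoint level strips of $\omega$) and then attaching $\CL$ and $\CR$ via Lemma \ref{l.lrdim} and Proposition \ref{p.stokdim}---is exactly the paper's proof, which consists of precisely this computation justified by the words ``independence of the relations''. Your scruple about the last step is legitimate: as defined, $\CL$ and $\CR$ are built from the \emph{same} family $(w_x)$ (indeed $\CR$ is the transpose of $\CL$), so the final application of Theorem \ref{thm:dimsum} is not a literal black-box application; the paper glosses over this. Your coupling observation is correct where you invoke it: for $x\neq y$ the event $\{x\CC_n y\}$ is a function of $(w_x,\omega,w_y)$ only, whose joint law is unchanged when $w_y$ is replaced by an independent copy $w_y'$, so \eqref{e.sreldef1} transfers from $\CC_n'=\CL\left(\prod_{i=2}^{n-1}\CM_{u(i-1)/n,ui/n}\right)\CR'$ to $\CC_n$; likewise \eqref{e.sreldef2} transfers whenever no left index coincides with a right index, in particular for four distinct points. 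Note also that for the only value of $n$ used later, $n=\lceil d/2\rceil$, one has $\alpha=0$, so \eqref{e.sreldef2} is vacuous and your argument is complete there, up to the easy diagonal case $x=y$ of \eqref{e.sreldef1} (force all intermediate points to equal a fixed neighbour of $x$).

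The gap is the final claim of your step 3: that the collision patterns (say $y=z$ or $x=v$) ``can be absorbed into the second term $c\la xyzv\ra^{-\alpha}$''. When $2n<d$, i.e. $\alpha=d-2n>0$, this does not follow from anything you have established. For $y=z$ you must bound $\prob[x\CC_n y,\ y\CC_n v]$, where the two witness chains share both the walk $w_y$ and the Poisson strips; the law of this pair of events genuinely differs from its independent-copy version, so the bound for $\CC_n'$ gives no information, and the crude estimate $\prob[x\CC_n y,\,y\CC_n v]\le\min\bigl(\prob[x\CC_n y],\prob[y\CC_n v]\bigr)\le c\max\bigl(\la xy\ra,\la yv\ra\bigr)^{-\alpha}$ is \emph{not} dominated by $c\la xy\ra^{-\alpha}\la yv\ra^{-\alpha}+c\la xyv\ra^{-\alpha}$ (take $x,y,v$ mutually at distance $R$: the required right-hand side is of order $R^{-2\alpha}$, the crude bound only $R^{-\alpha}$). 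Handling this case requires a hands-on second-moment estimate in the spirit of the proof of Proposition \ref{p.stokdim} (condition on $w_y$, sum over intermediate points, split according to whether trajectories are shared), or a variant of Theorem \ref{thm:dimsum} with a weakened independence hypothesis---exactly the obstacle you flag but do not resolve. The cleanest complete repair is simply to define $\CR$ from a family of walks independent of the one defining $\CL$: nothing elsewhere in the paper uses that $\CL$ and $\CR$ share the same walks, and with that convention Theorem \ref{thm:dimsum} applies verbatim, which is evidently what the paper's one-line proof implicitly assumes.
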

\begin{proof}
We have \begin{align} \notag dim_S(\CC) & =
dim_S\left(\mathcal{L}\left(\prod_{i=2}^{\left\lceil\frac{d}{2}\right\rceil-1}{\CL}_{u(i-1)/n,ui/n}\right)\mathcal{R}\right)\\\notag
&=\min\left(dim_S({\mathcal
L})+\sum_{i=2}^{\left\lceil\frac{d}{2}\right\rceil-1}dim_S({\CL}_{u(i-1)/n,ui/n})+dim_S({\mathcal
R}),d\right)\\\notag &=\min\left(2+2(\left\lceil
d/2\right\rceil-2)+2,d\right)\\&=\min(2n,d),
\end{align} where we in the second equality used the independence of the
relations and Theorem \ref{thm:dimsum}, and for the third equality used Lemma \ref{l.lrdim} and Proposition \ref{p.stokdim}.
\end{proof}

\section{Tail trivialities}
\begin{definition}
Let $\mathcal{E}$ be a random relation and $v\in\Z^d$. Define the
left tail field corresponding to the vertex $v$ to be
\bae\mathfs{F}^L_{\mathcal{E}}(v)=\bigcap_{K\subset\Z^d\text{
finite}}\sigma\{v\mathcal{E}x:x\notin K\}.\eae We say that
$\mathcal{E}$ is left tail trivial if
$\mathfs{F}^L_{\mathcal{E}}(v)$ is trivial for every $v\in\Z^d$.
\end{definition}
\begin{definition}
Let $\mathcal{E}$ be a random relation and $v\in\Z^d$. Define the
right tail field corresponding to the vertex $v$ be
\bae\mathfs{F}^R_{\mathcal{E}}(v)=\bigcap_{K\subset\Z^d\text{
finite}}\sigma\{x\mathcal{E}v:x\notin K\}.\eae We say that
$\mathcal{E}$ is right tail trivial if
$\mathfs{F}^R_{\mathcal{E}}(v)$ is trivial for every $v\in\Z^d$.
\end{definition}
\begin{definition}
Let $\mathcal{E}$ be a random relation. Define the remote tail
field to be \bae
\mathfs{F}^{Rem}_{\mathcal{E}}=\bigcap_{K_1,K_2\subset\Z^d\text{
finite}}\sigma\{x\mathcal{E}y:x\notin K_1,y\notin K_2\}. \eae We
say that $\mathcal{E}$ is remote tail trivial if
$\mathfs{F}^{Rem}_{\mathcal{E}}$ is trivial.
\end{definition}


\subsection{Left and right tail trivialities}
Recall the definition of the walks $(w_x)_{x\in{\mathbb Z}^d}$
Section~\ref{sec:stochdim}.
\begin{lem}
The relation $\mathcal{L}$ is left tail trivial. The relation
${\mathcal R}$ is right tail trivial.
\end{lem}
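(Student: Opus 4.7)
My plan is to handle the two claims separately, since they rely on very different probabilistic structures.

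\emph{Right tail of $\mathcal{R}$.} The event $\{x\mathcal{R}v\}=\{v\in\mathrm{range}(w_x)\}$ is measurable with respect to $\sigma(w_x)$ alone, and the walks $(w_x)_{x\in\mathbb{Z}^d}$ are independent by construction. Hence $\sigma\{x\mathcal{R}v:x\notin K\}\subset\sigma(w_x:x\notin K)$ for every finite $K$, so $\mathfs{F}^R_{\mathcal{R}}(v)$ is contained in the tail sigma-field of the independent family $(w_x)_{x}$, which is trivial by Kolmogorov's zero-one law.

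\emph{Left tail of $\mathcal{L}$.} Now every $\{v\mathcal{L}x\}=\{x\in\mathrm{range}(w_v)\}$ depends on the same walk $w_v$, so independence is unavailable. Fix $A\in\mathfs{F}^L_{\mathcal{L}}(v)$ and view it as a subset of the path space $W_+$. For a finite set $K\ni v$, let $\tau_K$ be the first exit time of the walk from $K$. Since the walk stays inside $K$ up to time $\tau_K$, one has $\mathrm{range}(w_v)\cap K^c=\mathrm{range}(w_v|_{[\tau_K,\infty)})\cap K^c$, and because $A\in\sigma(\mathrm{range}(w_v)\cap K^c)$ the strong Markov property yields
\begin{equation*}
\mathbb{P}[A\mid\mathcal{F}_{\tau_K}]=f(w_v(\tau_K)),\qquad f(y):=P_y[A],
\end{equation*}
where $f(y)$ is the probability of $A$ under the law of a walk started at $y$.

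The key step is to show $f$ is constant. I would prove this by a coupling argument: for any $y_1,y_2\in\mathbb{Z}^d$, the hitting distributions on the boundary of a large box $B_M$ from $y_1$ and from $y_2$ converge in total variation as $M\to\infty$; this is a standard consequence of the ratio estimate $G(y_1,z)/G(y_2,z)\to 1$ as $|z|\to\infty$ for the Green's function of transient SRW on $\mathbb{Z}^d$. Coupling two walks from $y_1$ and $y_2$ to exit $B_M$ at the same boundary point allows them to follow identical trajectories thereafter, and since $A$ is determined by $\mathrm{range}\cap B_M^c$ for every $M$, coupled walks agree on $A$; hence $|f(y_1)-f(y_2)|\to 0$, forcing $f\equiv\mathbb{P}[A]$. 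This harmonic-measure coupling is the main technical obstacle.

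Given that $f$ is constant, $\mathbb{P}[A\mid\mathcal{F}_{\tau_K}]=\mathbb{P}[A]$ almost surely for every $K\ni v$. As $K\nearrow\mathbb{Z}^d$, the stopping times $\tau_K\to\infty$ a.s., the filtration $(\mathcal{F}_{\tau_K})$ is increasing, and $\bigvee_K\mathcal{F}_{\tau_K}=\sigma(w_v)\supset\sigma(A)$, so L\'evy's upward theorem yields $\mathbf{1}_A=\mathbb{P}[A]$ a.s., i.e.\ $\mathbb{P}[A]\in\{0,1\}$.
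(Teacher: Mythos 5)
Your treatment of $\mathcal{R}$ rests on a misreading of the definition. The paper sets $\mathcal{R}=\{(x,y):x\in\mathrm{range}(w_y)\}$, so $\{x\mathcal{R}v\}=\{x\in\mathrm{range}(w_v)\}$, \emph{not} $\{v\in\mathrm{range}(w_x)\}$. Consequently every generator of $\mathfs{F}^R_{\mathcal{R}}(v)=\bigcap_K\sigma\{x\mathcal{R}v:x\notin K\}$ is a function of the \emph{single} walk $w_v$, and the appeal to independence of the family $(w_x)_x$ and Kolmogorov's zero--one law does not apply: you are analyzing the tail field of the wrong object (under your reading $\mathcal{R}$ would coincide with $\mathcal{L}$, which is not what is defined). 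The repair is immediate once the definition is read correctly: since $x\mathcal{R}v\Leftrightarrow v\mathcal{L}x\Leftrightarrow x\in\mathrm{range}(w_v)$, the right tail field of $\mathcal{R}$ at $v$ is \emph{literally the same $\sigma$-field} as the left tail field of $\mathcal{L}$ at $v$, so whatever proves the $\mathcal{L}$ statement proves the $\mathcal{R}$ statement verbatim --- which is exactly why the paper disposes of $\mathcal{R}$ with the word ``similarly''. But as written, your argument for $\mathcal{R}$ is not valid.

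Your argument for $\mathcal{L}$ is correct in outline but takes a much longer route than the paper. The paper simply observes that, because the walk moves by nearest-neighbour steps from $x$, it cannot touch $B(x,R)^c$ before time $R$, hence
$\mathfs{F}^L_{\mathcal{L}}(x)\subset\bigcap_{R}\sigma\{(w_x(i))_{i\ge R}\}$, the ordinary tail $\sigma$-field of the walk, whose triviality it quotes from Durrett. Your coupling scheme (strong Markov at $\tau_K$, harmonic function $f(y)=P_y[A]$, exit-measure coupling on $\partial B_M$, L\'evy upward) is essentially a self-contained re-proof of that quoted triviality, restricted to range events; it works, and it buys independence from the citation at the cost of two points you should make explicit. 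First, the total-variation convergence of the exit distributions of $B_M$ from two fixed points is most directly obtained from the discrete gradient/Harnack estimate for the harmonic functions $y\mapsto P_y[w(\tau_{B_M})=z]$, not from the Green's function ratio (which gives triviality of the invariant field, a weaker statement). Second, $f(y)=P_y[A]$ needs to be shown well defined: $A$ has a representation $\{\mathrm{range}\cap K^c\in\mathcal{A}_K\}$ for every $K$, these agree only $P_v$-a.s., and one must check they also agree $P_y$-a.s.\ (e.g.\ via mutual absolute continuity of the exit measures of a large ball from $v$ and from $y$) before the coupling at scale $M$ can be matched with the strong Markov step at scale $K$. Both points are standard and fixable; the substantive error is the $\mathcal{R}$ part above.
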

\begin{proof}
We start with the relation ${\mathcal L}$. For any $x\in {\mathbb
Z}^d$, we have
\begin{align}\label{e.ltriv1}
{\mathfs F}^L_{{\mathcal
L}}(x)\subset\bigcap_{R>1}\sigma\left\{\text{range}(w_x)\cap
B(x,R)^c\right\}\subset \bigcap_{R>1}\sigma\left\{(w_x(i))_{i\ge
R}\right\}.
\end{align}
Since the $\sigma$-algebra on the right hand side of~\eqref{e.ltriv1} is trivial (\cite{durrett2010probability} Theorem 6.7.5), ${\mathfs F}^L_{{\mathcal L}}(x)$
is trivial for every $x\in {\mathbb Z}^d$. Hence, ${\mathcal L}$
is left tail trivial. Similarly, we obtain that ${\mathcal R}$ is
right tail trivial.
\end{proof}

\subsection{Remote tail triviality}
We omit the details of the following lemma.
\begin{lem}\label{lem:poitotalvariation}
Fix $\mu_0\in {\mathbb R}_+$ and $s\in {\mathbb N}$. For
$\mu>\mu_0$, let $X\sim \text{Pois}(\mu-\mu_0)$ and $Y\sim
\text{Pois}(\mu)$. Then
\begin{equation}\label{e.poistotalvariation}
\sum_{t=0}^{\infty} |P[X=t-s]-P[Y=t]|\to 0\text{ as }\mu\to
\infty.
\end{equation}
\end{lem}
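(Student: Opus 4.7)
The plan is to recognize the sum as twice the total variation distance between the laws of $X+s$ and $Y$, and show this tends to zero by coupling $Y$ to $X$ via the convolution property of Poisson distributions and then exploiting the "smoothness" of the Poisson mass function at large mean.

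First I would use the fact that $Y \stackrel{d}{=} X + Z$, where $Z \sim \text{Pois}(\mu_0)$ is independent of $X$. Writing $P[Y=t] = \sum_k P[Z=k] P[X = t-k]$ and $P[X+s = t] = \sum_k P[Z=k] P[X = t-s]$, the difference becomes
\[
P[X=t-s] - P[Y=t] = \sum_k P[Z=k]\bigl(P[X = t-s] - P[X = t-k]\bigr).
\]
Summing $|\cdot|$ over $t$ and swapping the order of summation reduces the problem to controlling $\sum_t |P[X=t-s] - P[X=t-k]|$ for each $k$.

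Next I would apply translation invariance and the triangle inequality to get $d_{TV}(X+s, X+k) \leq |s-k|\, d_{TV}(X, X+1)$, yielding
\[
\sum_{t=0}^\infty |P[X=t-s] - P[Y=t]| \leq 2\, \mathbf{E}[|Z-s|]\cdot d_{TV}(X, X+1).
\]
Since $\mathbf{E}[|Z-s|] \leq \mu_0 + s$ is bounded uniformly in $\mu$, it suffices to show $d_{TV}(X,X+1) \to 0$ as $\mu \to \infty$ with $X \sim \text{Pois}(\mu-\mu_0)$.

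The final step, and the only one requiring a substantive estimate, is to bound $d_{TV}(\text{Pois}(\lambda), \text{Pois}(\lambda)+1)$. I would use the unimodality of the Poisson mass function $p_t = e^{-\lambda}\lambda^t/t!$: telescoping the sum $\sum_t |p_t - p_{t-1}|$ separately on the increasing and decreasing sides of the mode $t^\ast = \lfloor \lambda \rfloor$ gives $2 d_{TV}(X,X+1) = 2\max_t p_t = 2 p_{t^\ast}$, and Stirling's formula shows $p_{t^\ast} \sim (2\pi\lambda)^{-1/2}$. Thus $d_{TV}(X,X+1) = O(\lambda^{-1/2}) \to 0$, which closes the argument. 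The main (mild) obstacle is this Stirling estimate; everything else is routine coupling and triangle inequality.
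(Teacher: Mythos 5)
Your proof is correct and complete. The paper itself omits the proof of this lemma entirely (it says only ``We omit the details''), so there is nothing to compare against line by line; the only trace of the authors' intention is a commented-out fragment that reduces to a shift by one and begins estimating $\sum_t |p_{t+1}-p_t|$ directly from the Poisson mass function. Your argument supplies the two ingredients such a proof actually needs. First, the convolution coupling $Y \stackrel{d}{=} X+Z$ with $Z\sim\mathrm{Pois}(\mu_0)$ cleanly disposes of the change of parameter from $\mu-\mu_0$ to $\mu$, reducing everything to shifts of a single Poisson law; combined with translation invariance and the triangle inequality for total variation this gives the uniform bound $2\,\mathbf{E}[|Z-s|]\,d_{TV}(X,X+1)\le 2(\mu_0+s)\,d_{TV}(X,X+1)$. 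Second, the identity $\sum_t|p_t-p_{t-1}|=2p_{t^*}$ via unimodality and telescoping, together with Stirling giving $p_{t^*}=O(\lambda^{-1/2})$, is exactly the quantitative smoothness estimate the commented fragment was groping toward. All steps check out: $P[X=t-s]$ vanishes for $t<s$ so restricting the sum to $t\ge 0$ loses nothing, the increasing/decreasing split at the mode $t^*=\lfloor\lambda\rfloor$ is justified by $p_t/p_{t-1}=\lambda/t$, and the final rate $O\bigl((\mu_0+s)(\mu-\mu_0)^{-1/2}\bigr)$ is in fact stronger than the qualitative convergence the lemma asserts.
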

\begin{definition}
For a set $K\subset\Z^d$ denote by
$\eta_K=\omega(W_K^*)=|\{w\in\text{supp}(\omega):w\cap K\neq\phi\}|$.
\end{definition}
\begin{lem}\label{lem:numtotalvar}
Let $K\subset\Z^d$ be a finite set. Denote by $B=B(0,\rho)$, the
ball of radius $\rho$ around $0$. Then for any $s\in {\mathbb N}$,
\[
\sum_{t=0}^\infty\left|\pr[\eta_B=t|\eta_K=s]-\pr[\eta_B=t]\right|\to
0\text{ as }\rho\to\infty.
\]
\end{lem}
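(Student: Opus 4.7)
The plan is to reduce this to Lemma \ref{lem:poitotalvariation} by decomposing the interlacement point process into its parts hitting $K$ and hitting $B$ but not $K$. First I would take $\rho$ large enough that $K\subset B$, so $W_K^*\subset W_B^*$. By the thinning/restriction property of the Poisson point process $\omega_u$ on $W^*$, the two counts
\[
\eta_K=\omega_u(W_K^*)\quad\text{and}\quad \eta_{B\setminus K}:=\omega_u(W_B^*\setminus W_K^*)
\]
are independent Poisson random variables with means $u\,\cp(K)$ and $u(\cp(B)-\cp(K))$ respectively, and $\eta_B=\eta_K+\eta_{B\setminus K}$.

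From independence, the conditional law of $\eta_B$ given $\eta_K=s$ is the law of $s+\eta_{B\setminus K}$, so
\[
\pr[\eta_B=t\mid \eta_K=s]=\pr[\eta_{B\setminus K}=t-s].
\]
Setting $\mu=u\,\cp(B)$ and $\mu_0=u\,\cp(K)$, this rewrites the quantity to be estimated as
\[
\sum_{t=0}^{\infty}\big|\pr[\eta_{B\setminus K}=t-s]-\pr[\eta_B=t]\big|,
\]
with $\eta_{B\setminus K}\sim\mathrm{Pois}(\mu-\mu_0)$ and $\eta_B\sim\mathrm{Pois}(\mu)$.

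To finish I would invoke Lemma \ref{lem:poitotalvariation}: it only remains to check that $\mu\to\infty$ as $\rho\to\infty$, with $\mu_0$ fixed. This is the standard fact that $\cp(B(0,\rho))$ diverges in $\BZ^d$, $d\ge 3$ (indeed $\cp(B(0,\rho))\asymp \rho^{d-2}$), while $\cp(K)$ does not depend on $\rho$. There is no real obstacle here; the content of the lemma is that the information carried by conditioning on $\eta_K$ is a bounded shift on top of an increasingly large Poisson count, and such a shift is swamped in total variation by a Poisson variable whose mean tends to infinity.
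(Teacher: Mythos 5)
Your proof is correct and follows essentially the same route as the paper: decompose $\eta_B=\eta_K+(\eta_B-\eta_K)$ into independent Poisson counts coming from the disjoint trajectory sets $W_K^*$ and $W_B^*\setminus W_K^*$, identify the conditional law $\pr[\eta_B=t\mid\eta_K=s]=\pr[\eta_B-\eta_K=t-s]$, and apply the Poisson total-variation lemma with $\mu_0=u\,\cp(K)$, $\mu=u\,\cp(B)\to\infty$. The only additions are minor explicit checks (that $K\subset B$ for large $\rho$ and that $\cp(B(0,\rho))\to\infty$) which the paper leaves implicit.
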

\begin{proof}
Write $\eta_B=(\eta_B-\eta_K)+\eta_K.$ Observe that
$\eta_B-\eta_K$ and $\eta_K$ are independent random variables with
distributions $\text{Pois}(u\text{cap}(B)-u\text{cap}(K))$ and
$\text{Pois}(u\text{cap}(K))$ respectively. Consequently
\begin{equation}\label{e.poissoncondequation}
\pr[\eta_B=t|\eta_K=s]=\pr[\eta_B-\eta_K=t-s].
\end{equation}
Since $u\text{cap}(B)\to\infty$ as $\rho\to \infty$, the lemma
follows from~\eqref{e.poissoncondequation} and
Lemma~\ref{lem:numtotalvar}, with the choices
$\mu_0=u\text{cap}(K)$, $\mu=u\text{cap}(B)$, $X=\eta_B-\eta_K$
and $Y=\eta_B$.

\end{proof}

We will need the following lemma, easily deduced from \cite{lawler2010random} Proposition 2.4.2 and Theorem A.4.5. For every $x\in\Z^d$, denote by par$(x)=\sum_{j=1}^{d}x_j$, and even$(x)=\delta_{\text{par}(x)\text{ is even}}$.
\begin{lem}\label{cor:forgetstart}
Let $k>0$, $r>0$, $\epsilon>0$ and $K=B(0,r)\subset\Z^d$. For every \[(x_i,y_i)_{i=1}^k\in\partial K\times\partial K\] we can define $2k$ random walks $(X^i_n)_{i=1}^k,(Y^i_n)_{i=1}^k$, conditioned on never returning to $K$, on the same probability space with initial starting points $X^i_0=x_i,Y^i_0=y_i$ for all $1\leq i\leq k$ such that $((X_n^i)_{n\geq0},(Y_n^i)_{n\geq0})_{i=1}^k$ are independent and there is a $n=n(k,\epsilon,K)>0$ large enough for which
\[
P[\forall 1\leq i\leq k,X^i_m=Y^i_{m+even(x_i-y_i)}\text{ for all }m\geq n]\geq
1-\epsilon .\]
\end{lem}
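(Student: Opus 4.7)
The plan is to reduce the lemma to the case $k=1$ by exploiting the required pairwise independence, and then, for a single pair $(x,y)\in\partial K\times\partial K$, to construct a coupling of the two walks conditioned on never returning to $K$ by first running them independently until they are deep in the region where the conditioning is asymptotically trivial, and then applying the standard meeting coupling for simple random walks.

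First I would observe that since the pairs $((X^i_n)_n,(Y^i_n)_n)_{i=1}^k$ are required to be jointly independent, it suffices to build, for each fixed $i$, a coupling of $(X^i,Y^i)$ on its own probability space with
\[
P\left[X^i_m=Y^i_{m+\text{even}(x_i-y_i)}\text{ for all }m\geq n\right]\geq 1-\epsilon/k,
\]
and then take the product measure over $i$; a union bound delivers the joint bound with error $\epsilon$. So from here on fix a single pair $(x,y)\in\partial K\times\partial K$.

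For the pairwise coupling I would use the Doob $h$-transform representation: a SRW started at $z\in\partial K$ and conditioned never to return to $K$ is the $h$-transform of SRW killed on $K$, with $h(z)=P_z[\tilde H_K=\infty]$. By Proposition 2.4.2 of \cite{lawler2010random}, $h(z)\to 1$ as $|z|\to\infty$, so for any $\delta>0$ there exists $R=R(\delta,K)$ with $h(z)\geq 1-\delta$ for all $|z|\geq R$. Consequently, conditionally on a walk remaining at distance $\geq R$ from $K$ from some time $T_0$ onward, the Radon--Nikodym derivative of its future law with respect to an unconditioned SRW trajectory is bounded between $1$ and $(1-\delta)^{-1}$. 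I would then choose $T_0=T_0(\epsilon,k,K)$ so that with probability at least $1-\epsilon/(3k)$ both walks are at distance $\geq R$ from $K$ at time $T_0$ and stay at distance $\geq R$ forever, using transience of SRW in $d\geq 3$ together with the lower bound on $h$. Applying the classical pairwise meeting coupling of two SRWs on $\mathbb{Z}^d$ (Theorem A.4.5 of \cite{lawler2010random}), which succeeds in finite time with probability tending to $1$ and which respects the $\mathbb{Z}^d$ bipartite parity constraint via the time shift $\text{even}(x-y)$, yields coalescence at a finite random time; taking $n$ large enough that this coalescence has occurred with overall probability at least $1-\epsilon/k$ finishes the pairwise coupling.

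The main obstacle is transferring the SRW meeting coupling to the $h$-transformed walks, that is, controlling the Radon--Nikodym correction coming from the conditioning on $\{\tilde H_K=\infty\}$. The fact that $h\to 1$ at infinity, together with the observation that once both walks are deep in $\{|z|\geq R\}$ they almost surely stay there, is exactly what controls this correction; it is also what forces $n$ to depend on $K$ through $R=R(\delta,K)$ and through $\text{cap}(K)$, which governs both the typical escape time and the strength of the $h$-bound.
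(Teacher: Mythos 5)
The paper does not actually prove this lemma; it only remarks that the statement is ``easily deduced from \cite{lawler2010random} Proposition 2.4.2 and Theorem A.4.5,'' i.e.\ from an LCLT-type total-variation estimate between walks started at nearby points and the associated coupling construction. Your sketch is a correct reconstruction along essentially the intended lines: the reduction to $k=1$ by independence, the observation that the conditioning on $\{\tilde H_K=\infty\}$ becomes asymptotically trivial because $P_z[\tilde H_K<\infty]\to 0$ as $|z|\to\infty$ (so the conditioned path law from a far-away point is within small total variation of free SRW), and then the total-variation coupling of the two free walks with the parity shift. Two points deserve more care than you give them. First, two independent SRWs in $d\ge 3$ do not meet a.s., so the ``classical meeting coupling'' cannot mean running the walks until collision; it must be the maximal coupling at a fixed deterministic time built from the bound $\|P^{z_1}[S_n\in\cdot]-P^{z_2}[S_n\in\cdot]\|_{TV}\le c|z_1-z_2|/\sqrt{n}$, after which the walks are run together. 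Your phrasing (``coalescence at a finite random time,'' ``succeeds with probability tending to $1$'') suggests you know this, but you should also note that the separation $|X_{T_0}-Y_{T_0}|$ is random, so one must first restrict to an event $\{|X_{T_0}-Y_{T_0}|\le M\}$ of probability $\ge 1-\epsilon/(3k)$ before choosing $n$ large in terms of $M$. Second, the transfer from free to conditioned walks is cleanest not via a pathwise Radon--Nikodym bound on the event of staying far, but by noting that for $|z|\ge R$ the law $P_z[\,\cdot\mid\tilde H_K=\infty]$ on infinite paths has total variation distance at most $P_z[\tilde H_K<\infty]\le\delta$ from $P_z$, so the coupling of the free tails (an event concerning all $m\ge n$) carries over at an additional cost of order $\delta$ per walk. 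With these adjustments your argument is complete; also note that your attribution of ``$h(z)\to1$'' to Proposition 2.4.2 is almost certainly not what that proposition says (it is the LCLT total-variation estimate), though the fact itself is standard.
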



\begin{lem}
Let $u>0$. The relation $\mathcal{M}_u$ is remote tail trivial.
\end{lem}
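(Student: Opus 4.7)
My plan is to show every $A\in\mathfs{F}^{Rem}_{\mathcal{M}_u}$ is independent of itself, forcing $\pr[A]\in\{0,1\}$. Writing $B_r:=B(0,r)$ and $\mathcal{G}_r:=\sigma(\omega_{u|W^*_{B_r}})$ for the sigma-algebra generated by the trajectories of $\omega_u$ hitting $B_r$, and noting that every trajectory in the support of $\omega_u$ hits some such ball, one has $\sigma(\omega_u)=\bigvee_{r\ge 1}\mathcal{G}_r$ modulo null sets. Since $\mathfs{F}^{Rem}_{\mathcal{M}_u}\subset\sigma(\omega_u)$, it suffices to show $A$ is independent of $\mathcal{G}_r$ for each fixed $r$; this would give $A\perp\sigma(\omega_u)$, hence $\pr[A]^2=\pr[A]$.

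Fix $r$ and decompose $\omega_u=\alpha+\beta$ with $\alpha:=\omega_{u|W^*_{B_r}}$ and $\beta:=\omega_u-\alpha$ (independent Poisson processes). Let $\tilde\omega_u$ be an independent copy of $\omega_u$ with analogous decomposition $\tilde\alpha+\tilde\beta$, and set $\omega_u':=\beta+\tilde\alpha$. Then $\omega_u'\stackrel{d}{=}\omega_u$ by independence of the two Poisson pieces, and the trajectories of $\omega_u'$ hitting $B_r$ are exactly $\tilde\alpha$, which is independent of $\mathcal{G}_r=\sigma(\alpha)$. Writing $A'$ for the indicator of $A$ computed from $\omega_u'$, we therefore have $A'\perp\mathcal{G}_r$ and $A'\stackrel{d}{=}A$; hence exhibiting, for each $\epsilon>0$, a coupling of $(\omega_u,\omega_u')$ with $\pr[A\ne A']\le\epsilon$ gives $|\pr[A\cap E]-\pr[A]\pr[E]|\le\epsilon$ for every $E\in\mathcal{G}_r$, and letting $\epsilon\to 0$ yields $A\perp\mathcal{G}_r$.

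To build the coupling I exploit the remote tail property: $A\in\sigma\{x\mathcal{M}_u y:x,y\notin B_\rho\}$ for every $\rho$, so it is enough to arrange that the union of trajectory ranges outside some $B_\rho$ coincides in $\omega_u$ and in $\omega_u'$. Pick $\rho'\gg r$. The $\beta$-trajectories (not hitting $B_r$) are common to $\omega_u$ and $\omega_u'$, so only $\alpha$ and $\tilde\alpha$ need to be matched. First, Lemma~\ref{lem:numtotalvar} with $K=B_r$ and $B=B_{\rho'}$ lets me couple the numbers of trajectories hitting $B_{\rho'}$ so that $\alpha$ and $\tilde\alpha$ can be placed in bijection with probability at least $1-\epsilon/2$. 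Second, for each matched pair, the parts of the trajectories outside $B_{\rho'}$ are random walks conditioned on not returning to $B_{\rho'}$, started at points of $\partial B_{\rho'}$; Lemma~\ref{cor:forgetstart} couples each such pair so that after time $n=n(\rho',\epsilon/2)$ they coincide (up to parity shift). Setting $\rho:=\rho'+n$, the coupled trajectory ranges agree outside $B_\rho$ with probability at least $1-\epsilon$.

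The main obstacle is the simultaneous coupling: one must (i) couple the Poisson numbers of trajectories in the relevant restrictions of $\alpha$ and $\tilde\alpha$ compatibly with the fixed marginal law of $\omega_u'$, (ii) couple both past and future extensions of each doubly infinite trajectory outside $B_{\rho'}$ by applying Lemma~\ref{cor:forgetstart} in each direction (with the parity bookkeeping it requires), and (iii) preserve independence of $\alpha$ and $\tilde\alpha$ throughout. Once this coupling is implemented, the remote tail measurability of $A$ ensures $A(\omega_u)=A(\omega_u')$ on the high-probability event where the ranges coincide outside $B_\rho$, and the argument closes.
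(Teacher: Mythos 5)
There is a genuine gap, and it sits at the center of your plan: the two properties you demand of the pair $(\omega_u,\omega_u')$ exclude each other. To get $\pr[A'\cap E]=\pr[A]\pr[E]$ for every $E\in\mathcal{G}_r$ and an \emph{arbitrary} remote tail event $A$, you need $\omega_u'=\beta+\tilde\alpha$ to be independent of $\alpha$; together with $\omega_u'\stackrel{d}{=}\omega_u$ (which forces $\tilde\alpha\stackrel{d}{=}\alpha$ and $\tilde\alpha\perp\beta$), this pins the joint law of $(\alpha,\beta,\tilde\alpha)$ down to the product measure, so there is no coupling freedom left. Under the product coupling you cannot ``arrange'' that the traces of $\alpha$ and $\tilde\alpha$ agree outside a large ball: already their numbers of trajectories are two independent Poisson variables of mean $u\,\text{cap}(B(0,r))$, which coincide only with a probability bounded away from $1$; and proving $\pr[A\neq A']\le\epsilon$ for every $\epsilon$ under the product coupling is precisely the assertion that $A$ does not depend on $\alpha$, i.e.\ the statement to be proved. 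Conversely, the coupling you sketch in your third paragraph (matching the trajectories of $\alpha$ and $\tilde\alpha$ and forcing their far parts to coincide via Lemma~\ref{cor:forgetstart}) makes $\tilde\alpha$ strongly dependent on $\alpha$, and then $A'$ is no longer independent of $\mathcal{G}_r$, so the identity $\pr[A'\cap E]=\pr[A]\pr[E]$ is lost; your item (iii) is not an implementation obstacle but a contradiction. Moreover, Lemma~\ref{lem:numtotalvar} does not do what you ask of it: it bounds the total variation distance between the law of $\eta_{B(0,\rho')}$ conditioned on $\eta_{B(0,r)}=s$ and its unconditional law, within a single configuration; it gives no way to put the trajectories of two \emph{independent} copies of $\omega_u|_{W^*_{B(0,r)}}$ in bijection with probability $1-\epsilon/2$.

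For contrast, the paper never couples two interlacement configurations. It first reduces to showing that $A$ is independent of $\mathfs{F}_K=\sigma\{x\mathcal{M}_u y:x,y\in K\}$ for finite $K$ (a much smaller sigma-algebra than your $\mathcal{G}_r$), then conditions on the interface data $(N,\bar\gamma)$ --- the number of trajectories hitting $B(0,r_1)$ and their entry/exit points on $\partial B(0,r_2)$ --- notes that $A$ and $B\in\mathfs{F}_K$ are \emph{conditionally independent} given this data, and finally shows that the conditional law of $(N,\bar\gamma)$ given $B$ is close in total variation to its unconditional law, with Lemma~\ref{lem:numtotalvar} handling $N$ and Lemma~\ref{cor:forgetstart} handling $\bar\gamma$. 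These total variation estimates play the role your coupling was meant to play, but since only one copy of the process appears, no independence requirement conflicts with them; any repair of your argument would have to take this conditional-independence-plus-total-variation form (and would additionally have to cope with the fact that a general $E\in\mathcal{G}_r$, unlike $B\in\mathfs{F}_K$, can encode unboundedly much information about the trajectories through $B(0,r)$). A smaller point: even on the event that the traces agree outside $B(0,\rho)$, concluding $A(\omega_u)=A(\omega_u')$ needs an argument, since the matched trajectories' middle pieces between their crossings of $\partial B(0,\rho')$ can make excursions beyond $B(0,\rho)$ and the relation records which far pairs lie on a common trajectory; the corresponding step in the paper uses that these middle pieces are finite.
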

\begin{proof}
First we show that it is enough to prove that
$\mathfs{F}^{Rem}_{\mathcal{M}_u}$ is independent of
$\mathfs{F}_K=\sigma\{x\mathcal{M}_uy:x,y\in K\}$ for every finite
$K\subset\Z^d$. So assume this independence. Let
$A\in\mathfs{F}^R_{\mathcal{M}_u}$ and let $K_n$ be finite sets
such that $K_n\subset K_{n+1}$ for every $n$ and $\cup_n
K_n=\Z^d$. Let $M_n=\pr[A|\mathfs{F}_{K_n}]$. Then $M_n$ is a
martingale and $M_n=\pr[A]$ a.s, since we assumed independence.
From Doob's martingale convergence theorem we get that
$M_n\rightarrow\ind_A$ a.s and thus $\pr[A]\in\{0,1\}$.

Let $K\subset \Z^d$ be finite. Suppose
$A\in\mathfs{F}^{Rem}_{\mathcal{M}_u}$, $B\in\mathfs{F}_K$ and
that $\pr[A]>0$, $\pr[B]>0$. According to the above, to obtain the
remote tail triviality of ${\mathcal M}_u$ it is sufficient to
show that for any $\epsilon>0$,
\begin{equation}\label{e.nts1}
|\pr[A|B]-\pr[A]|<\epsilon
.\end{equation}

Let $0<r_1<r_2$ be such that $K\subset B(0,r_1)$. Later, $r_1$ and
$r_2$ will be chosen to be large. Fix $\epsilon>0$. Let
$N=\eta_{B(0,r_1)}$. Let $C=C(K)>0$ and $D=D(r_1)>0$ be so large
that \begin{equation}\label{e.Cchoice}\pr[\eta_K\ge
C]<\epsilon\pr[B]/4\mbox{ and }\pr[N\ge
D]<\epsilon\pr[B]/4.\end{equation} Recall that
$\omega_u|_{W^*_{B(0,r_1)}}=\sum_{i=1}^{N}\delta_{\pi^*(w_i)}$
where $N$ is Pois$(u\text{cap}(B(0,r_1)))$ distributed and
conditioned on $N$, $(w_i(0))_{i=1}^{N}$ are i.i.d. with
distribution $\tilde{e}_{B(0,r_1)}(\cdot)$, $((w_i(k))_{k\ge
0})_{i=1}^{N}$ are independent simple random walks, and
$((w_i(k))_{k\le 0})_{i=1}^{N}$ are independent simple random
walks conditioned on never returning to $B(0,r_1)$ (see for
example Theorem 1.1 and Proposition 1.3 of \cite{sznitmanvacant}).
Letting $\tau_i$ be the last time $(w_i(k))_{k\geq 0}$ visits
$B(0,r_1)$, we have have that $((w_i(k))_{k\ge\tau_i})_{i=1}^{N}$
are independent simple random walks conditioned on never returning
to $B(0,r_1)$. We define the vector
$$\bar{\xi}=\left(w_1(0),\ldots,w_N(0),w_1(\tau_1),\ldots,w_N(\tau_N)\right)\in
\partial (B(0,r_1))^{2N}.$$ Let
$\kappa^+_i=\inf\{k>\tau_i\,:\,w_i(k)\in\partial B(0,r_2)\}$ and
let $\kappa^-_i=\sup\{k<0\,:\,w_i(k)\in\partial B(0,r_2)\}.$
Define the vector
$$\bar{\gamma}=\left(w_1(\kappa^+_1),\ldots,w_N(\kappa^+_N),w_1(\kappa^-_1),\ldots,w_N(\kappa^-_N)\right)\in
\partial (B(0,r_2))^{2N}.$$

Observe that since $A$ belongs to
$\mathfs{F}^{Rem}_{\mathcal{M}_u}$ and
$|\kappa^+_i-\kappa^-_i|<\infty$ for $i=1,...,N$ a.s., we get that
$A$ is determined by $((w_i(k))_{k\ge \kappa^+_i})_{i=1}^N$ and
$((w_i(k))_{k\le \kappa^{-}_i})_{i=1}^N$ and
$\omega_u|_{W^*_{B(0,r_2) ^c}}$. On the other hand, $B$ is
determined by $((w_i(k))_{\kappa^{-}_i \le k \le
\kappa^{+}_i})_{i=1}^N$. In addition, conditioned on $N$ and
$\bar{\gamma}$ we have that $((w_i(k))_{k\ge
\kappa^+_i})_{i=1}^N$, $((w_i(k))_{k\le \kappa^{-}_i})_{i=1}^N$
and $((w_i(k))_{\kappa^{-}_i \le k \le  \kappa^{+}_i})_{i=1}^N$
are conditionally independent. Therefore, conditioned on $N$ and
$\bar{\gamma}$, the events $A$ and $B$ are conditionally
independent. It follows that for any $n\in {\mathbb N}$ and any
$\bar{x}\in (\partial B(0,r_2))^{2n}$

\begin{equation}\label{e.abindep}
\pr[A\cap B|N=n,\bar{\gamma}=\bar{x}]=\pr[A|N=n,\bar{\gamma}=\bar{x}]\pr[B|N=n,\bar{\gamma}=\bar{x}].
\end{equation}

From~\eqref{e.abindep} we easily deduce

\begin{equation}\label{e.abindep2}
\pr[A|B,N=n,\bar{\gamma}=\bar{x}]=\pr[A|N=n,\bar{\gamma}=\bar{x}].
\end{equation}

Therefore,
\bae\label{e.ets1}
|\pr[A|&B]-\pr[A]|=\left|\sum_{n=0}^{\infty}\sum_{\bar{x}\in(\partial B(0,r_2))^{2n}}\pr[A|B,N=n,\bar{\gamma}=\bar{x}]\pr[N=n,\bar{\gamma}=\bar{x}|B]-\pr[A]\right|\\
&\stackrel{~\eqref{e.abindep2}}{=}\left|\sum_{n=0}^{\infty}\sum_{\bar{x}\in(\partial
B(0,r_2))^{2n}}\pr[A|N=n,\bar{\gamma}=\bar{x}]\left[\pr[N=n,\bar{\gamma}=\bar{x}|B]-\pr[N=n,\bar{\gamma}=\bar{x}]\right]\right|\\&\le
\sum_{n=0}^{\infty}\sum_{\bar{x}\in(\partial
B(0,r_2))^{2n}}\left|\pr[N=n,\bar{\gamma}=\bar{x}|B]-\pr[N=n,\bar{\gamma}=\bar{x}]\right|.
\eae Hence, to obtain~\eqref{e.nts1} it will be enough to show
that the double sum appearing in the right hand side
of~\eqref{e.ets1} can be made arbitrarily small by choosing $r_1$
sufficiently large, and then $r_2$ sufficiently large. This will
be done in several steps.


Using Lemma \ref{lem:numtotalvar} we can choose $r_1$ big enough such that for every $m<C$, \begin{equation}\label{e.r1large}\sum_{n=0}^\infty|\pr[N=n|\eta_K=m]-\pr[N=n]|<\epsilon/4C.\end{equation}
Also observe that since $N$ depends only on $\omega_u|_{W^*_{K}}$ through $\eta_K$, we have
\begin{equation}\label{e.Bcondind}
\pr[N=n|B,\eta_K=m]=\pr[N=n|\eta_K=m].
\end{equation}
This gives
\bae\label{eq:Bind}
\sum_{n=0}^\infty|\pr[&N=n|B]-\pr[N=n]|\\
&=\sum_{n=0}^\infty|\sum_{m=0}^\infty\left(\pr[N=n|B,\eta_K=m]-\pr[N=n]\right)\pr[\eta_K=m|B]|\\
&\stackrel{~\eqref{e.Bcondind}}{\le}\sum_{n=0}^\infty\sum_{m=0}^{C-1}\left|\pr[N=n|\eta_K=m]-\pr[N=n]\right|\pr[\eta_K=m|B]\\
&+\sum_{n=0}^\infty\sum_{m=C}^\infty|\pr[N=n|\eta_K=m]-\pr[N=n]|\pr[\eta_K=m|B].\\
\eae

We now estimate the last two lines of ~\eqref{eq:Bind} separately. We have

\begin{equation}\label{e.Bind1}
\sum_{n=0}^\infty\sum_{m=0}^{C-1}\left|\pr[N=n|\eta_K=m]-\pr[N=n]\right|\pr[\eta_K=m|B]\stackrel{~\eqref{e.r1large}}{\le}\epsilon/4 \end{equation}

For the last line of ~\eqref{eq:Bind}, we get
\bae\label{e.Bind2}
\sum_{n=0}^\infty\sum_{m=C}^\infty & |\pr[N=n|\eta_K=m]-\pr[N=n]|\pr[\eta_K=m|B]\\
& \le \sum_{n=0}^\infty\sum_{m=C}^\infty \pr[N=n|\eta_K=m]\pr[\eta_K=m|B]
 + \sum_{n=0}^\infty\sum_{m=C}^\infty \pr[N=n] \pr[\eta_K=m|B]\\
& \le \sum_{n=0}^\infty\sum_{m=C}^\infty
\frac{\pr[N=n,\eta_K=m]}{\pr[B]}+\pr[\eta_K\ge
C|B]\le\frac{\pr[\eta_K\ge C]}{\pr[B]}+\frac{\pr[\eta_K\ge C]}{\pr[B]}\\
&\stackrel{~\eqref{e.Cchoice}}{\le} \frac{\epsilon}{2}. \eae

Combining~\eqref{eq:Bind},~\eqref{e.Bind1} and~\eqref{e.Bind2} we obtain that

\begin{equation}\label{e.Bfinal}
\sum_{n=0}^\infty|\pr[N=n|B]-\pr[N=n]|\le \frac{3\epsilon}{4}.
\end{equation}

By Lemma \ref{cor:forgetstart}, we can choose $r_2>r_1$ large
enough so that for any $n\le D$ and for any $\bar{y}\in(\partial
B(0,r_1))^{2n}$ , \bae\label{e.mixing}
&\sum_{\bar{x}\in\partial B(0,r_2)^n}\left|\pr[\bar{\gamma}=\bar{x}|B,N=n,\bar{\xi}=\bar{y}]-\pr[\bar{\gamma}=\bar{x}|N=n]\right|=\\
&\sum_{\bar{x}\in\partial
B(0,r_2)^n}\left|\pr[\bar{\gamma}=\bar{x}|N=n,\bar{\xi}=\bar{y}]-\pr[\bar{\gamma}=\bar{x}|N=n]\right|<\epsilon/2
,\eae

where the first equality can be shown in a way similar to~\eqref{e.Bcondind}.

Thus, for any $n\le D$,
\bae\label{eq:r3bigenough}
&\sum_{\bar{x}\in\partial B(0,r_2)^{2n}}\left|\pr[\bar{\gamma}=\bar{x}|B,N=n]-\pr[\bar{\gamma}=\bar{x}|N=n]\right|\\
&\le\sum_{\bar{x}\in\partial B(0,r_2)^{2n}}\sum_{\bar{y}\in\partial B(0,r_1)^{2n}}\left|\pr[\bar{\gamma}=\bar{x}|B,N=n,\bar{\xi}=\bar{y}]-\pr[\bar{\gamma}=\bar{x}|N=n]\right|\pr[\bar{\xi}=\bar{y}|B,N=n]\\
&=\sum_{\bar{y}\in\partial B(0,r_1)^{2n}}\sum_{\bar{x}\in\partial B(0,r_2)^{2n}}\left|\pr[\bar{\gamma}=\bar{x}|B,N=n,\bar{\xi}=\bar{y}]-\pr[\bar{\gamma}=\bar{x}|N=n]\right|\pr[\bar{\xi}=\bar{y}|B,N=n]\\
&\stackrel{~\eqref{e.mixing}}{<}\epsilon/2. \eae We now have what
we need to bound~\eqref{e.ets1}. \bae\label{e.longfinal}
\sum_{n=0}^{\infty}&\sum_{\bar{x}\in \partial B(0,r_2)^{2n}}\left|\pr[N=n,\bar{\gamma}=\bar{x}|B]-\pr[N=n,\bar{\gamma}=\bar{x}]\right|\\
&=\sum_{n=0}^{\infty}\sum_{\bar{x}\in \partial B(0,r_2)^{2n}}\left|\pr[\bar{\gamma}=\bar{x}|B,N=n]\pr[N=n|B]-\pr[\bar{\gamma}=\bar{x}|N=n]\pr[N=n]\right|\\
&\leq\sum_{n=0}^{\infty}\sum_{\bar{x}\in \partial B(0,r_2)^{2n}}\left|\pr[\bar{\gamma}=\bar{x}|B,N=n]\pr[N=n|B]-\pr[\bar{\gamma}=\bar{x}|N=n]\pr[N=n|B]\right|\\
&\quad+\sum_{n=0}^{\infty}\sum_{\bar{x}\in \partial B(0,r_2)^{2n}}\left|\pr[\bar{\gamma}=\bar{x}|N=n]\pr[N=n|B]-\pr[\bar{\gamma}=\bar{x}|N=n]\pr[N=n]\right|\\
&\leq\sum_{n=0}^{\infty}\sum_{\bar{x}\in \partial B(0,r_2)^{2n}}\left|\pr[\bar{\gamma}=\bar{x}|B,N=n]-\pr[\bar{\gamma}=\bar{x}|N=n]\right|\pr[N=n|B]\\
&\quad+\sum_{n=0}^{\infty}\sum_{\bar{x}\in \partial B(0,r_2)^{2n}}\pr[\bar{\gamma}=\bar{x}|N=n]\left|\pr[N=n|B]-\pr[N=n]\right|\\
&\stackrel{~\eqref{e.Bfinal}}{\le}\sum_{n=0}^{D}\sum_{\bar{x}\in \partial B(0,r_2)^{2n}}\left|\pr[\bar{\gamma}=\bar{x}|B,N=n]-\pr[\bar{\gamma}=\bar{x}|N=n]\right|\pr[N=n|B]\\
&\quad+\sum_{n=D}^{\infty}\sum_{\bar{x}\in \partial B(0,r_2)^{2n}}\left|\pr[\bar{\gamma}=\bar{x}|B,N=n]-\pr[\bar{\gamma}=\bar{x}|N=n]\right|\pr[N=n|B]+\frac{3\epsilon}{4}\\
&\stackrel{~\eqref{eq:r3bigenough}}{\le}\frac{\epsilon}{2}+2\pr[N\ge
D|B]+\frac{3\epsilon}{4}\le \frac{\epsilon}{2}+2\frac{\pr[N\ge
D]}{\pr[B]}+\frac{3\epsilon}{4}\stackrel{~\eqref{e.Cchoice}}{\le}\frac{7\epsilon}{4}.
\eae where the first inequality follows from the triangle
inequality. Since $\epsilon>0$ is arbitrary, we deduce that
$\pr[A|B]=\pr[A]$ from~\eqref{e.ets1} and~\eqref{e.longfinal}. The
triviality of the sigma algebra ${\mathfs F}_{{\mathcal
M}_u}^{Rem}$ is therefore established.
\end{proof}


\section{Upper bound}\label{sec:upper}
In this section, we provide the proof of the upper bound of
Theorem~\ref{thm:main}. Throughout this section, fix $n=\lceil d/2
\rceil$. Recall the definition of the trajectories $(w_x)_{x\in
{\mathbb Z}^d}$ from Section~\ref{sec:stochdim}. We have proved in
Lemma \ref{lem:cdimd} that the random relation $\mathcal{C}_{n}$
has stochastic dimension $d$, and therefore
$\inf_{x,y\in\Z^d}{\mathbf P}[x\mathcal{C}_{n} y]>0$. Since
$\mathcal{L}$ is left tail trivial, $\mathcal{R}$ is right tail
trivial and the relations $\mathcal{M}_{u(i-1)/n,ui/n}$ are remote
tail trivial for $i=1,...,n$, we obtain from Corollary 3.4 of
\cite{benjamini2004geometry} that
\begin{align}\label{e.punch1} {\mathbf P}[x\mathcal{C}_n y]=1\text{ for every
}x,y\in\Z^d.\end{align} Now fix $x$ and $y$ and let $A_1$ be the
event that $x\in {\mathcal I}^{u/n}$ and $A_2$ be the event that
$y\in {\mathcal I}^{(n-1)u/n,u}$. Put $A=A_1\cap A_2$. We now
use~\eqref{e.punch1} to argue that
\begin{align}\label{e.punch2}
\pr\left[x\left( \prod_{i=1}^{n} {\mathcal
M}_{u(i-1)/n,ui/n}\right) y\bigg|A\right]=1.
\end{align}
To see this, first observe that $A$ is the event that
$\omega_{0,u/n}(W^*_x)\ge 1$ and $\omega_{u(n-1)/n,u}(W^*_y)\ge
1$. Consequently, on $A$, ${\mathcal I}(\omega_{u/n}|_{W_y^*})$
contains at least one trace of a simple random walk started at $x$
and hence stochastically dominates $\text{range}(w_y)$. In the
same way, ${\mathcal
I}(\omega_{u(n-1)/n,u/n}|_{W_x^*})$stochastically dominates
$\text{range}(w_y)$. Thus we obtain
\begin{align}\label{e.punch3}
{\mathbb P}\left[x\left( \prod_{i=1}^{n} {\mathcal
M}_{u(i-1)/n,ui/n}\right) y\bigg|A\right]\ge {\mathbf
P}[x\mathcal{C}_n y]=1,
\end{align}
giving~\eqref{e.punch2}. Equation~\eqref{e.punch2} implies that if
$x\in {\mathcal I}^{u/n}$ and $y\in {\mathcal I}^{(n-1)u/n,u}$,
then $x$ and $y$ are ${\mathbb P}$-a.s. connected in the ranges of
at most $\lceil d/2 \rceil$ trajectories from
$\text{supp}(\omega_u)$.

Now let $I_1=[t_1,t_2]\subset [0,u]$ and $I_2=[t_3,t_4]\subset
[0,u]$ be disjoint intervals. Let $A_{I_1,I_2}$ be the event that
$x\in {\mathcal I}^{t_1,t_2}$ and $y\in {\mathcal I}^{t_3,t_4}$.
The proof of~\eqref{e.punch2} is easily modified to obtain
\begin{equation}\label{e.punch4}
{\mathbb P}\left[x\left( \prod_{i=1}^{n} {\mathcal
M}_{u(i-1)/n,ui/n}\right) y\bigg|A_{I_1,I_2}\right]=1.
\end{equation}

Observe that up to a set of measure $0$, we have

\begin{equation}\label{e.i}
\{x\in {\mathcal I}^u,\,y\in {\mathcal I}^u\}=\{x{\mathcal M}_u
y\}\cup\left(\bigcup_{I_1,I_2} \{x\in {\mathcal
I}^{t_1,t_2},\,y\in {\mathcal I}^{t_3,t_4}\}\right),
\end{equation}

where the union is over all disjoint intervals
$I_1=[t_1,t_2],I_2=[t_3,t_4]\subset [0,u]$ with rational distinct
endpoints. Observe that all the events in the countable union on
the right hand side of~\eqref{e.i} have positive probability. In
addition, due to~\eqref{e.punch4}, conditioned on any of them, we
have $x\left( \prod_{i=1}^{n} {\mathcal M}_{u(i-1)/n,ui/n}\right)
y$ a.s. Therefore, we finally conclude that
\begin{equation}\label{e.punch5}
{\mathbb P}\left[x\left( \prod_{i=1}^{n} {\mathcal
M}_{u(i-1)/n,ui/n}\right) y\bigg|x,y\in {\mathcal I}^u\right]=1,
\end{equation}
finishing the proof of the upper bound of Theorem~\ref{thm:main}.

\section{Lower bound}\label{sec:lower}
In this section, we provide the proof of the lower bound of
Theorem~\ref{thm:main}. More precisely, we show that with
probability one, there are vertices $x$ and $y$ contained in
${\mathcal I}^u$ that are not connected by a path using at most
$\lceil \frac{d}{2} \rceil-1$ trajectories from
$\text{supp}(\omega_u)$. Recall the definition of the
decomposition of $\omega_u$ into $\omega_u^0$, $\omega_u^1$,...
from Section~\ref{sec:randominterlace}. For $k=0,1,...$, define
$$V_k=\bigcup_{w^*\in\text{supp}(\sum_{i=0}^k \omega_u^i)}w^*({\mathbb Z}).$$
 In addition, let
$V_{-1}=\{0\}$ and $V_{-2}=\emptyset$. Observe that with this
notation,
$$\omega_u^k=\omega_u|_{(W^*_{V_{k-1}}\setminus W^*_{V_{k-2}})},\,k=0,1,...$$
Here $\omega_{u|_A}$ denotes $\omega_u$ restricted to the set of
trajectories $A\subset W^*$. We also observe that conditioned on
$\omega_u^0,...,\omega_u^{k-1}$, under ${\mathbb P}$,
\begin{equation}\label{e.pproperty}
\omega_u^k\text{ is a Poisson point process on }W^*\text{ with
intensity measure }u \ind_{W^*_{V_{k-1}}\setminus
W^*_{V_{k-2}}}\nu(dw^*),
\end{equation}
see the Appendix for details. We now construct the vector
$(\bar{\omega}_u^0,...,\bar{\omega}_u^k)$ with the same law as the
vector $(\omega_u^0,...,\omega_u^k)$ in the following way. Suppose
$\sigma_0,\sigma_1,...$ are i.i.d. with the same law as
$\omega_u$. Let $\bar{\omega}_u^0=\sigma_0|_{W^*_{\{0\}}}$ and
then proceed inductively as follows: Given
$\bar{\omega}_u^0,...,\bar{\omega}_u^k$, define
$$\bar{V}_k=\bigcup_{w^*\in\text{supp}(\sum_{i=0}^k \bar{\omega}_u^i)}w^*({\mathbb Z}),$$ and let $\bar{V}_{-1}=\{0\}$ and $\bar{V}_{-2}=\emptyset$.
Then let
$$\bar{\omega}_u^{k+1}=\sigma_{k+1}|_{(W^*_{\bar{V}_{k}}\setminus
W^*_{\bar{V}_{k-1}})}.$$ Using~\eqref{e.pproperty} one checks that
in this procedure, for any $k\ge 0$, the vector
$(\bar{\omega}_u^0,...,\bar{\omega}_u^k)$ has the same law as
$(\omega_u^0,...,\omega_u^k)$.

Let $m=\lceil \frac{d}{2} \rceil-1$. We now get that

\begin{align}\label{e.lboundprel}
{\mathbb P}[0{\mathcal M}_u^{(m)}x] & ={\mathbb
P}\left[0\stackrel{V_{m-1}}{\longleftrightarrow}x\right]={\mathbb
P}^{\otimes
n}\left[0\stackrel{\bar{V}_{m-1}}{\longleftrightarrow}x\right].
\end{align}
The event
$\left\{0\stackrel{\bar{V}_{m-1}}{\longleftrightarrow}x\right\}$
is the event that there is some $l\le m-1$ and trajectories
$\gamma_i\in \bar{\omega}_u^i$, $i=0,...,l$, such that
$\gamma_i({\mathbb Z})\cap \gamma_{i+1}({\mathbb Z})\neq
\emptyset$, $0\in \gamma_0({\mathbb Z})$ and $x\in
\gamma_l({\mathbb Z})$. Since $\bar{\omega}_u^i\le \sigma_i$, we
obtain
\begin{align}\label{e.lboundfinal}
{\mathbb P}^{\otimes
n}\left[0\stackrel{\bar{V}_{m-1}}{\longleftrightarrow}x\right]\le \sum_{l=0}^{m-1}
{\mathbb P}^{\otimes n}\left[0 \prod_{i=0}^{l}\left({\mathcal
M}_{u} (\sigma_i) \right)x\right],
\end{align}
where we use the notation ${\mathcal M}_{u} (\sigma_i)$ for the
random relation defined in the same way as ${\mathcal M}_u$, but
using $\sigma_i$ instead of $\omega_u$. Now use the independence
of the $\sigma_i$'s and the fact that $\text{dim}_{\mathcal
S}({\mathcal M}_{u} (\sigma_i))=2$, to obtain that for any $l\le m-1$,
\begin{equation}\label{e.lboundstochdim}\text{dim}_{\mathcal
S}\left(\prod_{i=0}^{l}\left({\mathcal M}_{u} (\sigma_i)
\right)\right)\le 2 m<d.\end{equation} Therefore by~\eqref{e.lboundprel},~\eqref{e.lboundfinal} and~\eqref{e.lboundstochdim},
\begin{align}\label{e.low1}{\mathbb P}[0{\mathcal
M}_u^{(m)}x]\to 0\text{ as }|x|\to \infty.
\end{align}
Put $\hat{\omega}_u^m=\sum_{i=0}^{m}\omega_u^i$. Observe that
Equation~\eqref{e.low1} can be restated as
\begin{align}\label{e.low2}{\mathbb P}\left[x\in {\mathcal I}(\hat{\omega}_u^{m-1})\right]\to 0\text{ as }|x|\to \infty.\end{align} For $n\ge 1$, let $x_n=n e_1$. For $n\ge 1$, we define
the events $A_n=\{x_n\in {\mathcal I}^u(\hat{\omega}_u^{m-1})\}$
and $B_n=\{x_n\in {\mathcal I}^u\}$ Using~\eqref{e.low2} we can
find a sequence $(n_k)_{k=1}^{\infty}$ such that
\begin{align}\sum_{k=1}^{\infty} {\mathbb P}[A_{n_k}]<\infty.\end{align} By the Borel-Cantelli lemma, \begin{align}\label{e.low3}{\mathbb P}[A_{n_k}\text{i.o.}]=0.\end{align}
On the other hand, by ergodicity (see Theorem 2.1 in
\cite{sznitmanvacant}), we have
\begin{align}\label{e.low4}
\lim_{n\to\infty}\frac{1}{n}\sum_{k=1}^{n}{\bf
1}_{B_{n_k}}={\mathbb P}[0\in {\mathcal I}^u] \text{ a.s.}
\end{align}
Since ${\mathbb P}[0\in {\mathcal I}^u]>0$,
Equation~\eqref{e.low4} implies that
\begin{align}\label{e.low5}
{\mathbb P}[B_{n_k}\text{ i.o.}]=1.
\end{align}
From equations~\eqref{e.low4} and~\eqref{e.low5} it readily
follows that ${\mathbb P}[\cup_{i\ge 1}(B_i\setminus A_i)]=1$,
which means that a.s. the set $\{y\in {\mathbb Z}^d\,:\,y\in
{\mathcal I}^u,\,y\notin {\mathcal I}^u(\hat{\omega}_u^{m-1})\}$
is non-empty. In particular, on the event that $0\in {\mathcal
I}^u$, we can a.s. find points belonging to ${\mathcal I}^u$ that
cannot be reached from $0$ using the ranges of at most $m=\lceil
d/2 \rceil -1$ trajectories from $\text{supp}(\omega_u)$.\qed


\section{Open questions}
The following question was asked by Itai Benjamini: Given two
points $x,y\in\Z^d$, estimate the probability that $x$ and $y$ are
connected by at most $\left\lceil\frac{d}{2}\right\rceil$
trajectories intersecting a ball of radius $r$ around the origin.

Answering the first question can help solve the question of how
one finds the $\left\lceil\frac{d}{2}\right\rceil$ trajectories
connecting two points in an efficient manner.\newline


{\bf Acknowledgments:}

We wish to thank Itai Benjamini for suggesting this problem and
some discussions, and Noam Berger for discussions and feedback.

\section{Appendix}\label{sec:appendix}
Here we show a technical lemma (Lemma~\ref{l.condpoisslemma}
below) needed in the proof of the lower bound in Section
~\ref{sec:lower}. For the proof of Lemma~\ref{l.condpoisslemma},
we need the following lemma, which is standard and we state
without proof.
\begin{lem}\label{l.helplemma}
Let $X$ be a Poisson point process on $W^*$, with intensity
measure $\rho$. Let $A\subset W^*$ be chosen at random
independently of $X$. Then, conditioned on $A$, the point process
$1_A X$ is a Poisson point process on $W^*$ with intensity measure
$1_A \rho$.
\end{lem}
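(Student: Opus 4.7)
The plan is to prove the statement first for a deterministic measurable set $A\subset W^*$ and then pass to the random case by conditioning on $A$, exploiting its independence from $X$. Throughout I think of $X$ as a random element of the space of locally finite point measures on $W^*$ equipped with its canonical Borel $\sigma$-algebra, and of $A$ as a random measurable subset of $W^*$ such that $(a,w)\mapsto \ind_a(w)$ is jointly measurable.

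For deterministic $A$, the cleanest route is via the Laplace functional: for any non-negative measurable $f\colon W^*\to\BR_+$,
\bae
\BE\!\left[\exp\!\left(-\int f\, d(\ind_A X)\right)\right]
&= \BE\!\left[\exp\!\left(-\int \ind_A f\, dX\right)\right]\\
&= \exp\!\left(-\int\!\big(1-e^{-\ind_A f}\big)\, d\rho\right)\\
&= \exp\!\left(-\int\!\big(1-e^{-f}\big)\, d(\ind_A\rho)\right),
\eae
where the middle equality is the defining Laplace functional of the Poisson point process $X$. Since Laplace functionals characterize the law of a point process, $\ind_A X$ is a Poisson point process with intensity $\ind_A\rho$. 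One could alternatively argue more concretely: for pairwise disjoint measurable $B_1,\ldots,B_n\subset W^*$, the counts $(\ind_A X)(B_i)=X(A\cap B_i)$ are independent Poissons with means $\rho(A\cap B_i)=(\ind_A\rho)(B_i)$, since the sets $A\cap B_i$ are themselves pairwise disjoint.

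For random $A$ independent of $X$, I would condition through a regular conditional probability. By independence, the regular conditional law of $X$ given $A=a$ equals its unconditional law, so applying the deterministic case inside the conditioning yields, for every non-negative measurable $f$,
\bae
\BE\!\left[\exp\!\left(-\int f\, d(\ind_A X)\right)\,\bigg|\,A\right]
= \exp\!\left(-\int\!\big(1-e^{-f}\big)\, d(\ind_A\rho)\right)\quad\text{a.s.},
\eae
which is precisely the conditional Laplace functional of a Poisson point process with (random) intensity $\ind_A\rho$, proving the lemma.

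The main technical obstacle is measurability bookkeeping rather than any genuine probabilistic content: one needs joint measurability of $(a,w)\mapsto \ind_a(w)$ so that $\ind_A X$ is a bona fide random point measure, and one needs a regular conditional probability of $X$ given $A$. Both are routine once $A$ is realized as a measurable random element of a Polish space of measurable subsets, which holds in the application in Section~\ref{sec:lower}, where the randomness of $A$ comes from the point measures $\omega_u^0,\ldots,\omega_u^{k-1}$ taking values in a standard Borel space.
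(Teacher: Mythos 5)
The paper gives no proof of this lemma at all: it is stated as ``standard'' and used as a black box in the proof of Lemma~\ref{l.condpoisslemma}, so there is no argument of the authors to compare yours against. Your proposal correctly supplies the standard argument: the deterministic case is the restriction theorem for Poisson processes, and your Laplace-functional computation is right, since $1-e^{-\ind_A f}=\ind_A(1-e^{-f})$ turns $\int(1-e^{-\ind_A f})\,d\rho$ into $\int(1-e^{-f})\,d(\ind_A\rho)$ (the finite-dimensional alternative via independent Poisson counts $X(A\cap B_i)$ works equally well). The passage to random $A$ is also handled the right way: using the regular conditional law of $X$ given $A$, which by independence is again $\mathrm{Pois}(\rho)$, lets you apply the deterministic case pointwise in $A$ and avoids the usual pitfall of having an $f$-dependent null set in the conditional Laplace functional identity. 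The measurability caveat you flag is genuinely the only bookkeeping issue, and it is harmless in the application of Section~\ref{sec:lower}, where $A=W^*_{{\mathcal I}(\omega_u^k)}$ so that $\ind_A(w)=\ind\{\mathrm{range}(w)\cap {\mathcal I}(\omega_u^k)\neq\emptyset\}$ is jointly measurable. In short, the proposal is correct and fills a gap the paper deliberately left open.
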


Write $\omega_u=\sum_{k=0}^{\infty}\omega_u^k$ where $\omega_u^k$
is defined in the end of Section ~\ref{sec:randominterlace}. Put
$V_{-2}=\emptyset$ and $V_{-1}=\{0\}$ and
\begin{equation}\label{e.vdef}
V_k={\mathcal I}\left(\sum_{i=0}^k \omega_u^i\right),\,k=0,1,...
\end{equation}
Recall that
\begin{equation}\label{e.omeq1}
\omega_u^k=\omega_u|_{(W^*_{V_{k-1}}\setminus
W^*_{V_{k-2}})},\,k=0,1,...
\end{equation}
Introduce the point process
\begin{equation}\label{e.omeq2}
\tilde{\omega}_u^k=\omega_u|_{W^*\setminus
W^*_{V_{k-1}}},\,k=0,1,...
\end{equation}
For $k\ge 0$, write ${\mathbb P}_k$ for ${\mathbb P}$ conditioned
on $\omega_u^0,..,\omega_u^k$.
\begin{lem}\label{l.condpoisslemma}
Fix $k\ge 0$. Then, conditioned on
$\omega_u^0,...,\omega_u^{k-1}$, the point processes $\omega_u^k$
and $\tilde{\omega}_u^k$ are independent Poisson point processes
on $W^*$, with intensity measures
\begin{equation}\label{e.intmeasure1}
u \ind_{(W^*_{V_{k-1}}\setminus W^*_{V_{k-2}})}\nu(d w^*)
\end{equation}
and
\begin{equation}\label{e.intmeasure2}
u \ind_{(W^*\setminus W^*_{V_{k-1}})}\nu(d w^*),
\end{equation}
respectively.
\end{lem}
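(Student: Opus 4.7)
The plan is to prove the lemma by induction on $k\ge 0$, exploiting the observation that all of the sets $V_{-2},V_{-1},V_0,\ldots,V_{k-1}$ are measurable functions of $(\omega_u^0,\ldots,\omega_u^{k-1})$. Consequently, once we condition on this data, both $V_{k-2}$ and $V_{k-1}$ (and hence the intensity measures in~\eqref{e.intmeasure1} and~\eqref{e.intmeasure2}) become deterministic, so the desired conclusion should reduce to the standard Poisson splitting property applied to the disjoint decomposition
\begin{equation*}
W^*\setminus W^*_{V_{k-2}} \;=\; \bigl(W^*_{V_{k-1}}\setminus W^*_{V_{k-2}}\bigr)\sqcup \bigl(W^*\setminus W^*_{V_{k-1}}\bigr),
\end{equation*}
which is genuinely a disjoint union because $V_{k-2}\subset V_{k-1}$.

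The base case $k=0$ is immediate: $V_{-2}=\emptyset$ and $V_{-1}=\{0\}$ are already deterministic, so $\omega_u^0=\omega_u|_{W^*_{\{0\}}}$ and $\tilde\omega_u^0=\omega_u|_{W^*\setminus W^*_{\{0\}}}$ are independent Poisson point processes with intensities $u\ind_{W^*_{\{0\}}}\nu$ and $u\ind_{W^*\setminus W^*_{\{0\}}}\nu$ by the classical restriction property of Poisson point processes. For the inductive step, I would first record the identity $\omega_u^k+\tilde\omega_u^k=\omega_u|_{W^*\setminus W^*_{V_{k-2}}}=\tilde\omega_u^{k-1}$, valid by the disjoint-union display above. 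By the inductive hypothesis applied at level $k-1$, conditional on $(\omega_u^0,\ldots,\omega_u^{k-2})$ the process $\tilde\omega_u^{k-1}$ is a Poisson point process on $W^*$ with intensity $u\ind_{W^*\setminus W^*_{V_{k-2}}}\nu$, and it is independent of $\omega_u^{k-1}$. Hence further conditioning on $\omega_u^{k-1}$ leaves the conditional law of $\tilde\omega_u^{k-1}$ unchanged.

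This additional conditioning does, however, determine $V_{k-1}$, and hence freezes the subset $A:=W^*_{V_{k-1}}\setminus W^*_{V_{k-2}}$ inside $W^*\setminus W^*_{V_{k-2}}$. Applying Lemma~\ref{l.helplemma} to $X=\tilde\omega_u^{k-1}$ and the random set $A$ (which, given $(\omega_u^0,\ldots,\omega_u^{k-2})$, is a function of $\omega_u^{k-1}$ and therefore independent of $X$) then yields that, conditional on $(\omega_u^0,\ldots,\omega_u^{k-1})$, the two restrictions $\ind_A\tilde\omega_u^{k-1}=\omega_u^k$ and $\ind_{W^*\setminus W^*_{V_{k-1}}}\tilde\omega_u^{k-1}=\tilde\omega_u^k$ are independent Poisson point processes on $W^*$ with intensities~\eqref{e.intmeasure1} and~\eqref{e.intmeasure2}, completing the induction.

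The only substantive obstacle is to be careful that the intensities in~\eqref{e.intmeasure1} and~\eqref{e.intmeasure2} are random before conditioning, so one cannot just naively invoke Poisson splitting on a disjoint decomposition of $W^*$. The role of Lemma~\ref{l.helplemma} is precisely to package this issue: it allows us to treat the random decomposition determined by $V_{k-1}$ exactly as if it were deterministic, provided we have the (conditional) independence between $V_{k-1}$ and the Poisson point process $\tilde\omega_u^{k-1}$ that we are splitting, which is exactly what the inductive hypothesis supplies. Everything else is bookkeeping with the inclusion chain $V_{-2}\subset V_{-1}\subset\cdots\subset V_{k-1}$ and the telescoping of $\omega_u$ into the pieces $(\omega_u^i)_{i\ge 0}$.
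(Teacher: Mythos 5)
Your proposal is correct and follows essentially the same route as the paper: induction on $k$, with the base case handled by the restriction property of Poisson processes on the deterministic sets $W^*_{\{0\}}$ and its complement, and the inductive step handled by applying Lemma~\ref{l.helplemma} to the conditionally Poisson process $\tilde{\omega}_u^{k-1}$ and the random set determined by $V_{k-1}$, whose conditional independence from $\tilde{\omega}_u^{k-1}$ is exactly what the inductive hypothesis provides. The only differences are cosmetic (you index the step as $k-1\to k$ rather than $k\to k+1$ and write the splitting via the identity $\omega_u^k+\tilde{\omega}_u^k=\tilde{\omega}_u^{k-1}$, which corresponds to the paper's \eqref{e.induct1}--\eqref{e.int2}).
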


\begin{proof}
We will proceed by induction. First consider the case $k=0$. We
have $\omega_u^0=\omega_u|_{W^*_{\{0\}}}$ and
$\tilde{\omega}_u^0=\omega|_{W^*\setminus W^*_{\{0\}}}$. The sets
of trajectories $W^*_{\{0\}}$ and $W^*\setminus W^*_{\{0\}}$ are
non-random. Therefore we get that, using for example Proposition
3.6 in \cite{resnick}, $\omega_u^0$ and $\tilde{\omega}_u^0$ are
Poisson point processes with intensity measures that agree
with~\eqref{e.intmeasure1} and~\eqref{e.intmeasure2} respectively.
In addition, the sets of trajectories $W^*_{\{0\}}$ and
$W^*\setminus W^*_{\{0\}}$ are disjoint, and therefore
$\omega_u^0$ and $\tilde{\omega}_u^0$ are independent.

Now fix some $k\ge 0$ and assume that the assertion of the lemma
is true for $k$. Observe that we have
\begin{equation}\label{e.induct1}
\omega_u^{k+1}=\tilde{\omega}_u^k|_{W^*_{{\mathcal
I}(\omega_u^k)}}
\end{equation}
and
\begin{equation}\label{e.induct2}
\tilde{\omega}_u^{k+1}=\tilde{\omega}_u^k|_{W^*\setminus
W^*_{{\mathcal I}(\omega_u^k)}}.
\end{equation}
By the induction assumption, $\omega_u^k$ and $\tilde{\omega}_u^k$
are independent Poisson process under ${\mathbb P}_{k-1}$. In
particular, under ${\mathbb P}_{k-1}$, $\tilde{\omega}_u^k$ and
$W^*_{{\mathcal I}(\omega_u^k)}$ are independent. Therefore, using
Lemma \ref{l.helplemma} and~\eqref{e.induct1}, we see that if we
further condition on $\omega_u^k$, the point process
$\omega_u^{k+1}$ is a Poisson point process on $W^*$ with
intensity measure given by $u \ind_{W^*_{{\mathcal I}(\omega_u^k)}}
\ind_{(W^*\setminus W^*_{V_{k-1}})}\nu(d w^*)$. However,
\begin{equation}\label{e.int1}
u \ind_{W^*_{{\mathcal I}(\omega_u^k)}} \ind_{(W^*\setminus
W^*_{V_{k-1}})}\nu(d w^*)=u \ind_{(W^*_{V_k}\setminus
W^*_{V_{k-1}})}\nu(d w^*),
\end{equation}
and therefore the claim regarding $\omega_u^{k+1}$ established.
The claim regarding $\tilde{\omega}_u^{k+1}$ follows similarly, by
noting that
\begin{equation}\label{e.int2}
u \ind_{W^*\setminus W^*_{{\mathcal I}(\omega_u^k)}} \ind_{(W^*\setminus
W^*_{V_{k-1}})}\nu(d w^*)=u \ind_{(W^*\setminus W^*_{V_{k}})}\nu(d
w^*).
\end{equation}

\end{proof}

\bibliography{RL}
\bibliographystyle{alpha}
\end{document}